\newcommand{\sysn}{\left\{\begin{array}{rcl}}
\newcommand{\sysk}{\end{array}\right.}
\newtheorem{theorem}{Theorem}[section]
\newtheorem{lemma}[theorem]{Lemma}
\theoremstyle{example}
\newtheorem{proposition}[theorem]{Proposition}
\theoremstyle{definition}
\newtheorem{definition}[theorem]{Definition}
\newtheorem{corollary}[theorem]{Corollary}
\journal{...}
\begin{document}

\begin{frontmatter}

\title{Selectors for sequences of subsets of hyperspaces}

\author{Alexander V. Osipov}

\ead{OAB@list.ru}


\address{Krasovskii Institute of Mathematics and Mechanics, Ural Federal
 University,

 Ural State University of Economics, 620219, Yekaterinburg, Russia}

\begin{abstract}
For a Hausdorff space $X$ we denote by $2^X$ the family of all
closed subsets of $X$. In this paper we continue to research
relationships between closure-type properties of hyperspaces over
a space $X$ and covering properties of $X$. We investigate
selectors for sequence of subsets of the space $2^X$  with the
${\bf Z^+}$-topology and the upper Fell topology (${\bf
F^+}$-topology). Also we consider the selection properties of the
bitopological space $(2^X,{\bf F^+}, {\bf Z^+}).$

\end{abstract}

\begin{keyword}
 hyperspace \sep upper Fell topology \sep selection principles \sep
 bitopological space \sep ${\bf Z^+}$-topology \sep perfect space
 \sep $k$-perfect space

\MSC[2010]   54B20 \sep 54D20  \sep 54E55

\end{keyword}

\end{frontmatter}

\section{Introduction}
\label{}

Given a Hausdorff space $X$ we denote by $2^X$ the family of all
closed subsets of $X$. If $A$ is a subset of $X$ and $\mathcal{A}$
a family of subsets of $X$, then

$A^c=X\setminus A$ and $\mathcal{A}^c=\{A^c : A\in \mathcal{A}\}$,

$A^{-}=\{F\in 2^{X} : F\cap A\neq \emptyset \}$,

$A^+=\{F\in 2^X : F\subset A \}$.

Let $\Delta$ be a subset of $2^X$ closed for finite unions and
containing all singletons. We consider the next important cases:

$\bullet$ $\Delta$ is the collection $CL(X)=2^X\setminus
\{\emptyset\}$;

$\bullet$ $\Delta$ is the family $\mathbb{K}(X)$ of all non-empty
compact subsets of $X$;

$\bullet$ $\Delta$ is the family $\mathbb{F}(X)$ of all non-empty
finite subsets of $X$.

For $\Delta\subset 2^X$, the {\it upper $\Delta$-topology},
denoted by $\Delta^+$, is the topology whose base is the
collection $\{(D^c)^+ : D\in \Delta \}\cup \{2^X\}$.

When $\Delta=CL(X)$ we have the well-known {\it upper Vietoris
topology} ${\bf V^+}$, when $\Delta=\mathbb{K}(X)$ we have the
{\it upper Fell topology} (known also as the co-compact topology)
${\bf F^+}$, and when $\Delta=\mathbb{F}(X)$ we have the ${\bf
Z^+}$-topology.

Many topological properties are defined or characterized in terms
 of the following classical selection principles (\cite{koc2,sash,ts1}).
 Let $\mathcal{A}$ and $\mathcal{B}$ be sets consisting of
families of subsets of an infinite set $X$. Then:

$S_{1}(\mathcal{A},\mathcal{B})$ is the selection hypothesis: for
each sequence $(A_{n}: n\in \mathbb{N})$ of elements of
$\mathcal{A}$ there is a sequence $( b_{n} : n\in \mathbb{N})$
such that for each $n$, $b_{n}\in A_{n}$, and $\{b_{n}:
n\in\mathbb{N} \}$ is an element of $\mathcal{B}$.

$S_{fin}(\mathcal{A},\mathcal{B})$ is the selection hypothesis:
for each sequence $( A_{n}: n\in \mathbb{N})$ of elements of
$\mathcal{A}$ there is a sequence $( B_{n}: n\in \mathbb{N})$ of
finite sets such that for each $n$, $B_{n}\subseteq A_{n}$, and
$\bigcup_{n\in\mathbb{N}}B_{n}\in\mathcal{B}$.

In this paper, by a cover we mean a nontrivial one, that is,
$\mathcal{U}$ is a cover of $X$ if $X=\bigcup \mathcal{U}$ and
$X\notin \mathcal{U}$.

An open cover of a space is {\it large} if each element of the
space belongs to infinitely many elements of the cover.

An open cover $\mathcal{U}$ of a space $X$ is called:

$\bullet$  an $\omega$-cover (a $k$-cover) if each finite
(compact) subset $C$ of $X$ is contained in an element of
$\mathcal{U}$;

$\bullet$  a $\gamma$-cover (a $\gamma_k$-cover) if $\mathcal{U}$
is infinite and for each finite (compact) subset $C$ of $X$ the
set $\{U\in \mathcal{U} : C\nsubseteq U\}$ is finite.

Because of these definitions all spaces are assumed to be {\it
Hausdorff non-compact}, unless otherwise stated.

Let us mention that any $\omega$-cover ($k$-cover) is infinite and
large, and that any infinite subfamily of a $\gamma$-cover
($\gamma_k$-cover) is also a $\gamma$-cover ($\gamma_k$-cover).

For a topological space $X$ we denote:

$\bullet$ $\mathcal{O}$ --- the family of all open covers of $X$;

$\bullet$ $\Gamma$ --- the family of all open $\gamma$-covers of
$X$;

$\bullet$ $\Gamma_k$ --- the family of all open $\gamma_k$-covers
of $X$;

$\bullet$ $\Omega$ --- the family of all open $\omega$-covers of
$X$;

$\bullet$ $\mathcal{K}$ --- the family of all open $k$-covers of
$X$.

\bigskip

 Different $\Delta$-covers ($k$-covers, $\omega$-covers, $k_F$-covers, $c_F$-covers,...) exposed many dualities in hyperspace topologies such as co-compact topology ${\bf F^+}$, co-finite topology ${\bf
 Z^+}$, Pixley-Roy topology, Fell topology and Vietoris topology. They also play important roles in
 selection principles ([1,4-9,12-16]).

In this paper we continue to research relationships between
closure-type properties of hyperspaces over a space $X$ and
covering properties of $X$. We investigate selectors for sequence
of subsets of the space $2^X$  with the ${\bf Z^+}$-topology and
the upper Fell topology (${\bf F^+}$-topology). Also we consider
the selection properties of the bitopological space $(2^X,{\bf
F^+}, {\bf Z^+}).$

\section{Main definitions and notation}

The following lammas will be often used throughout the paper,
sometimes without explicit reference.

\begin{lemma}(Lemma 1 in \cite{cmkm})\label{l1} Let $Y$ be an open subset of a
space $X$ and $\mathcal{U}$ an open cover of $Y$. Then the
following holds:

(1)  $\mathcal{U}$ is a $k$-cover of $Y$ $\Leftrightarrow$ $Y^c\in
Cl_{F^+}(\mathcal{U}^c)$;

(2) $\mathcal{U}$ is an $\omega$-cover of $Y$ $\Leftrightarrow$
$Y^c\in Cl_{Z^+}(\mathcal{U}^c)$.

\end{lemma}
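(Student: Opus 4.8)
The plan is to prove both equivalences by directly unfolding the definition of the upper $\Delta$-topology, the closure operator, and the covering notions. Since statement (2) is the verbatim analogue of (1) with the family $\mathbb{F}(X)$ of finite sets in place of the family $\mathbb{K}(X)$ of compact sets, I would carry out the argument in full for (1) and then obtain (2) \emph{mutatis mutandis}. First I would record the ambient facts that make the statement meaningful: because $Y$ is open in $X$, every $U\in\mathcal{U}$ is open in $X$, so each complement $U^c$ is closed and $\mathcal{U}^c\subseteq 2^X$; likewise $Y^c\in 2^X$. Thus it makes sense to speak of the point $Y^c$ and the set $\mathcal{U}^c$ inside the hyperspace $(2^X,{\bf F^+})$.

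The core is a short computation about basic neighborhoods. A basic ${\bf F^+}$-neighborhood of $Y^c$ has the form $(K^c)^+=\{F\in 2^X : F\cap K=\emptyset\}$ for some compact $K\in\mathbb{K}(X)$ (the trivial basic set $2^X$ always meets the nonempty set $\mathcal{U}^c$ and may be ignored). I would then observe the two key translations obtained by complementation: $Y^c\in(K^c)^+$ iff $Y^c\cap K=\emptyset$ iff $K\subseteq Y$; and, for $U\in\mathcal{U}$, $U^c\in(K^c)^+$ iff $U^c\cap K=\emptyset$ iff $K\subseteq U$. Consequently the ${\bf F^+}$-neighborhoods of $Y^c$ are exactly the sets $(K^c)^+$ with $K$ a compact subset of $Y$, and such a neighborhood meets $\mathcal{U}^c$ precisely when some $U\in\mathcal{U}$ contains $K$.

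Finally I would assemble these observations. By definition $Y^c\in Cl_{F^+}(\mathcal{U}^c)$ means every ${\bf F^+}$-neighborhood of $Y^c$ meets $\mathcal{U}^c$, which by the previous paragraph says exactly that for each compact $K\subseteq Y$ there is $U\in\mathcal{U}$ with $K\subseteq U$; and this is precisely the assertion that $\mathcal{U}$ is a $k$-cover of $Y$. The identical chain, with $\mathbb{F}(X)$ replacing $\mathbb{K}(X)$ and hence finite sets $S\subseteq Y$ replacing compact sets, yields the $\omega$-cover characterization of (2).

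This proof carries no genuine obstacle; it is essentially bookkeeping. The one point demanding care is the double complementation, namely keeping straight that the inclusion $F\subseteq K^c$ defining membership in $(K^c)^+$ reverses to the opposite inclusion $K\subseteq{}$(the relevant open set) after passing to complements, so that ``$Y^c$ sits in the neighborhood'' becomes ``$K$ lies in $Y$'' and ``the neighborhood captures some $U^c$'' becomes ``$U$ covers $K$''. I would also dispose separately of the harmless degenerate case $Y=\emptyset$ should the standing convention on covers require it.
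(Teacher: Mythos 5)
Your proof is correct: the paper itself states this lemma without proof (it is imported as Lemma~1 of \cite{cmkm}), and your direct unfolding of the basic ${\bf F^+}$- and ${\bf Z^+}$-neighborhoods $(K^c)^+$ of $Y^c$, together with the complementation dictionary $Y^c\in(K^c)^+\Leftrightarrow K\subseteq Y$ and $U^c\in(K^c)^+\Leftrightarrow K\subseteq U$, is exactly the standard argument behind that citation. Nothing is missing; the degenerate case $Y=\emptyset$ you flag is indeed harmless under the paper's convention that covers are nontrivial.
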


\begin{lemma}(Lemma 2 in \cite{cmkm})\label{lem2} Let $X$ be a topological
space, $Y$ an open subsets of $X$ and $\mathcal{U}=\{U_n: n\in
\mathbb{N}\}$ an open cover of $Y$. Then

(1) $\mathcal{U}$ is a $\gamma_k$-cover of $Y$ $\Leftrightarrow$
the sequence $(U^c_n : n\in \mathbb{N})$ converges to $Y^c$ in
$(2^X, {\bf F}^+)$;

(2) $\mathcal{U}$ is an $\gamma$-cover of $Y$ $\Leftrightarrow$
the sequence $(U^c_n : n\in \mathbb{N})$ converges to $Y^c$ in
$(2^X, {\bf Z}^+)$.
\end{lemma}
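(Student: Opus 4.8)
The plan is to reduce both equivalences to a direct translation between the neighbourhood base of $Y^c$ in the relevant hyperspace topology and the tail condition defining the two kinds of covers, handling (1) and (2) in parallel since the only difference is ``compact'' versus ``finite''. I would first unwind the basic open neighbourhoods of $Y^c$ in $(2^X,\mathbf{F}^+)$: such a set has the form $(K^c)^+$ with $K\in\mathbb{K}(X)$, and $Y^c\in(K^c)^+$ means $Y^c\subseteq K^c$, i.e. $K\cap Y^c=\emptyset$, i.e. $K\subseteq Y$. So the basic neighbourhoods of $Y^c$ are exactly the $(K^c)^+$ with $K$ a compact subset of $Y$. Dually, $U_n^c\in(K^c)^+$ holds iff $U_n^c\subseteq K^c$, iff $K\subseteq U_n$. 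Combining these, convergence of $(U_n^c)$ to $Y^c$ in $\mathbf{F}^+$ says precisely that for every compact $K\subseteq Y$ there is $N$ with $K\subseteq U_n$ for all $n\ge N$, equivalently that $\{n: K\nsubseteq U_n\}$ is finite for every compact $K\subseteq Y$. This is verbatim the tail part of the definition of a $\gamma_k$-cover of $Y$.

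For part (2) I would repeat the identical computation with $S\in\mathbb{F}(X)$ in place of $K$: the basic neighbourhoods of $Y^c$ in $\mathbf{Z}^+$ are the $(S^c)^+$ with $S$ a finite subset of $Y$, and convergence corresponds to $\{n: S\nsubseteq U_n\}$ being finite for every finite $S\subseteq Y$, i.e. the tail condition of a $\gamma$-cover. Throughout I would read $\mathcal{U}=\{U_n:n\in\mathbb{N}\}$ as a faithful (injective) enumeration, so that ``$\{U\in\mathcal{U}:C\nsubseteq U\}$ is finite'' and ``$\{n:C\nsubseteq U_n\}$ is finite'' are interchangeable; without this, a single bad set repeated infinitely often would break the index-level statement while leaving the set-level one true, so this convention is exactly what makes the translation exact. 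Since each $U_n$ is open, each $U_n^c$ is closed and the sequence genuinely lives in $2^X$, and $Y^c\in 2^X$ as well.

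The one step that is not pure translation, and which I expect to be the main (if minor) obstacle, is the clause ``$\mathcal{U}$ is infinite'' built into the definitions of $\gamma_k$- and $\gamma$-covers. From cover to convergence it is given for free. For the converse I would argue that convergence forces infinitude: since a cover is nontrivial we have $Y\notin\mathcal{U}$ and each $U_n\subseteq Y$, so if the sequence took only finitely many distinct values some value $V$ would recur at arbitrarily large indices; applying convergence to the one-point sets $\{y\}$ (compact in part (1), finite in part (2)) for each $y\in Y$ would give $y\in U_n$ for all large $n$, hence $y\in V$, so $Y\subseteq V\subseteq Y$ and $V=Y$, contradicting $Y\notin\mathcal{U}$. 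Thus $\mathcal{U}$ is infinite, and with this the two chains of equivalences close, requiring no structural hypothesis on $X$ beyond what makes singletons compact.
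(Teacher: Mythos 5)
Your proof is correct, and there is nothing in the paper to compare it against: the lemma is stated without proof, being quoted as Lemma~2 of \cite{cmkm}, and your argument --- identifying the basic neighbourhoods of $Y^c$ in $(2^X,{\bf F}^+)$ (resp.\ $(2^X,{\bf Z}^+)$) as the sets $(K^c)^+$ with $K$ compact (resp.\ finite) and contained in $Y$, and matching membership $U_n^c\in(K^c)^+$ with $K\subseteq U_n$ --- is exactly the standard unwinding of the definitions that the cited source relies on. Your explicit handling of the two side conditions (reading $\{U_n : n\in\mathbb{N}\}$ as a faithful enumeration so that set-level and index-level finiteness agree, and deriving infinitude of $\mathcal{U}$ from convergence via the paper's nontriviality convention $Y\notin\mathcal{U}$ together with $\bigcup\mathcal{U}=Y$) addresses the only genuine subtleties in the statement.
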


\begin{lemma}(Lemma 3 in \cite{cmkm})\label{l3} Given a space $X$ and an
open cover $\mathcal{U}$ of $X$ the following holds:

(1) $\mathcal{U}$ is a $k$-cover of $X$ $\Leftrightarrow$
$\mathcal{U}^c$ is a dense subset of $(2^X, {\bf F}^+)$;

(2) $\mathcal{U}$ is an $\omega$-cover of $X$ $\Leftrightarrow$
$\mathcal{U}^c$ is a dense subset of $(2^X, {\bf Z}^+)$.

\end{lemma}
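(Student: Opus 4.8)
The plan is to prove both equivalences by directly unwinding the definition of density in terms of the basis of the relevant upper $\Delta$-topology, and then translating the hyperspace membership $U^c \in (K^c)^+$ back into the containment $K \subseteq U$ by complementation. Since $\mathcal{U}$ is a cover with $X \notin \mathcal{U}$, every $U^c$ is a non-empty closed set, so $\mathcal{U}^c \subseteq 2^X$ is a genuine subset of the hyperspace, and density of $\mathcal{U}^c$ amounts to the requirement that every non-empty basic open set of $(2^X, {\bf F}^+)$ (resp. $(2^X, {\bf Z}^+)$) meets $\mathcal{U}^c$. As the basis consists of the sets $(K^c)^+$ with $K \in \mathbb{K}(X)$ (resp. $S \in \mathbb{F}(X)$) together with $2^X$ itself, and $2^X$ trivially meets $\mathcal{U}^c$, it suffices to test the sets $(K^c)^+$; note each such set is non-empty, since it contains $\emptyset \in 2^X$.

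For (1), the crux is the chain of equivalences
\[
K \subseteq U \iff U^c \subseteq K^c \iff U^c \in (K^c)^+,
\]
valid for every $U \in \mathcal{U}$ and every compact $K$. In the forward direction I assume $\mathcal{U}$ is a $k$-cover and fix a basic open $(K^c)^+$ with $K \in \mathbb{K}(X)$; the $k$-cover property yields $U \in \mathcal{U}$ with $K \subseteq U$, whence the displayed equivalence gives $U^c \in \mathcal{U}^c \cap (K^c)^+$, so $\mathcal{U}^c$ is dense. For the converse I assume density, fix an arbitrary compact $K$, and apply density to the non-empty basic open set $(K^c)^+$ to obtain some $U^c \in \mathcal{U}^c \cap (K^c)^+$; the equivalence then returns $K \subseteq U$, so $\mathcal{U}$ is a $k$-cover.

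Statement (2) is handled by the identical argument with $\mathbb{F}(X)$ in place of $\mathbb{K}(X)$ and a finite set $S$ in place of the compact set $K$: one records $S \subseteq U \iff U^c \in (S^c)^+$ and runs both implications verbatim. I expect no substantive obstacle here; the only points demanding care are (i) reducing density to the basic open sets of the upper $\Delta$-topology rather than to arbitrary open sets, and (ii) the bijective complementation correspondence between $\mathcal{U}$ and $\mathcal{U}^c$ that converts the covering condition into the membership condition. The standing assumption that $X$ is non-compact is what keeps the basic sets $(K^c)^+$ from collapsing to $\{\emptyset\}$ and so keeps both sides of each equivalence non-degenerate, but the correspondence itself is purely mechanical.
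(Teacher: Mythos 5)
Your proof is correct, and since the paper states this lemma without proof (citing Lemma~3 of \cite{cmkm}), there is nothing to diverge from: your direct unwinding of density against the basic open sets $(K^c)^+$ (resp.\ $(S^c)^+$) and the complementation equivalence $K \subseteq U \iff U^c \in (K^c)^+$ is exactly the standard argument behind the cited result. The two points you flag --- reduction to basic open sets (each of which is non-empty because $\emptyset \in (K^c)^+$) and the bijective correspondence $\mathcal{U} \leftrightarrow \mathcal{U}^c$ --- are indeed the only places where care is needed, and you handle both.
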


Let $X$ be a topological space, and $x\in X$. A subset $A$ of $X$
{\it converges} to $x$, $x=\lim A$, if $A$ is infinite, $x\notin
A$, and for each neighborhood $U$ of $x$, $A\setminus U$ is
finite. Consider the following collection:

$\bullet$ $\Omega_x=\{A\subseteq X : x\in \overline{A}\setminus
A\}$;

$\bullet$ $\Gamma_x=\{A\subseteq X : x=\lim A\}$.

Note that if $A\in \Gamma_x$, then there exists $\{a_n\}\subset A$
converging to $x$. So, simply $\Gamma_x$ may be the set of
non-trivial convergent sequences to $x$.

\begin{definition}  Let $X$ be a space and let $\mathcal{U}=\{U_{\alpha}: \alpha\in \Lambda\}$ be an open
cover of $X$. Then {\it $\mathcal{U}^c=\{U^c_{\alpha}: \alpha\in
\Lambda\}$ converges to $\{\emptyset\}$} in $(2^X, \tau)$ where
$\tau$ is a topology on $2^X$, if for every $F\in 2^X$ the
$\mathcal{U}^c$ converges to $F$, i.e. for each neighborhood $W$
of $F$ in the space $(2^X, \tau)$, $|\{\alpha :
U_{\alpha}^c\nsubseteq W, \alpha\in \Lambda \}|<\aleph_0$.
\end{definition}

\begin{lemma}\label{lem5} Let $X$ be a space and let $\mathcal{U}=\{U_{\alpha}: \alpha\in \Lambda\}$ be an open
cover of $X$. Then the following are equivalent:
\begin{enumerate}

\item $\mathcal{U}$ is an $\gamma$-cover of $X$;

\item $\mathcal{U}^c$ converges to $\{\emptyset\}$ in $(2^X, {\bf
Z}^+)$.

\end{enumerate}

\end{lemma}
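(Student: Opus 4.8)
The plan is to unwind both statements into one combinatorial condition on $\mathcal{U}$, namely that for every finite set $D\subseteq X$ all but finitely many members of $\mathcal{U}$ contain $D$, and to move between the two sides through the base of ${\bf Z}^+$. A basic open set has the form $(D^c)^+=\{F\in 2^X: F\subseteq D^c\}$ with $D$ finite, and for a member $U_\alpha^c$ of $\mathcal{U}^c$ one has $U_\alpha^c\in (D^c)^+$ precisely when $U_\alpha^c\cap D=\emptyset$, i.e. when $D\subseteq U_\alpha$. Hence $U_\alpha^c\notin (D^c)^+$ says exactly $D\nsubseteq U_\alpha$, which is the relation appearing in the definition of a $\gamma$-cover. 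First I would record this dictionary and then read each condition through it.

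The key preliminary observation, and the step that makes the apparently strong demand ``converges to every $F\in 2^X$'' tractable, is that $\emptyset$ is a generic point of $(2^X,{\bf Z}^+)$: since $\emptyset\subseteq D^c$ for every finite $D$, the empty set lies in every basic open set $(D^c)^+$, hence in every nonempty ${\bf Z}^+$-open set. Consequently every neighbourhood of an arbitrary $F\in 2^X$ is also a neighbourhood of $\emptyset$, so being eventually inside all neighbourhoods of $\emptyset$ forces being eventually inside all neighbourhoods of $F$. Therefore $\mathcal{U}^c$ converges to $\{\emptyset\}$ (to every $F$) if and only if it converges to the single closed set $\emptyset$, and for the latter it suffices to test the basic neighbourhoods $(D^c)^+$ of $\emptyset$ with $D$ finite. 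This collapses condition (2) to convergence to $\emptyset$ alone.

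With these two reductions the equivalence is essentially a rewriting. For $(1)\Rightarrow(2)$, assume $\mathcal{U}$ is a $\gamma$-cover and fix a basic neighbourhood $(D^c)^+$ of $\emptyset$; the $\gamma$-cover property applied to the finite set $D$ makes $\{\alpha: D\nsubseteq U_\alpha\}$ finite, so by the dictionary $\{\alpha: U_\alpha^c\notin (D^c)^+\}$ is finite, and as $D$ was arbitrary $\mathcal{U}^c$ converges to $\emptyset$, hence to $\{\emptyset\}$. For $(2)\Rightarrow(1)$, given convergence to $\{\emptyset\}$, equivalently to $\emptyset$, fix any finite $C\subseteq X$ and apply convergence at the neighbourhood $(C^c)^+$ of $\emptyset$: the finiteness of $\{\alpha: U_\alpha^c\notin(C^c)^+\}=\{\alpha: C\nsubseteq U_\alpha\}$ is exactly the defining clause of a $\gamma$-cover for $C$.

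The one point needing genuine care, rather than mere translation, is the infiniteness clause in the definition of a $\gamma$-cover. In $(1)\Rightarrow(2)$ it is part of the hypothesis and costs nothing; in $(2)\Rightarrow(1)$ one must guarantee that $\mathcal{U}$ is infinite, since a finite family satisfies the finite-set condition vacuously without being a $\gamma$-cover. I would dispose of this by working, as elsewhere in the paper, with the family indexed over an infinite set $\Lambda$ so that the cofinite clauses are genuine restrictions, whence the infiniteness of $\mathcal{U}$ is automatic. I expect the generic-point reduction of condition (2) to convergence to $\emptyset$, together with this bookkeeping about $\Lambda$, to be the only non-routine parts; the remainder is just the verification of the equivalence $U_\alpha^c\in(D^c)^+\Leftrightarrow D\subseteq U_\alpha$ and its substitution into the two definitions.
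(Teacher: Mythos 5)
Your proposal is correct and follows what is evidently the intended argument: the paper states Lemma~\ref{lem5} without proof, as the routine companion to Lemma~\ref{lem2}, and its implicit verification is precisely your dictionary $U_\alpha^c\in(D^c)^+\Leftrightarrow D\subseteq U_\alpha$ together with the observation that $\emptyset$ lies in every nonempty ${\bf Z}^+$-open set, so that convergence to $\emptyset$ alone already settles convergence to every $F\in 2^X$. Your flagging of the infiniteness clause is also sound: under the paper's standing convention that covers are nontrivial, if $\Lambda$ is infinite and condition (2) holds then $\mathcal{U}$ cannot be a finite family (pick one point outside each of its finitely many members and apply (2) to that finite set), so the infiniteness required of a $\gamma$-cover is indeed recovered.
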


For a topological space $(2^X, \tau)$ we denote:

$\bullet$ $\mathcal{D}_{\Omega}$ --- the family of dense subsets
of $(2^X, \tau)$;

$\bullet$ $\mathcal{D}_{\Gamma}$ --- the family of converging to
$\{\emptyset\}$  subsets of $(2^X, \tau)$.

Since every $\gamma$-cover contains a countably $\gamma$-cover,
then each converging to $\{\emptyset\}$ subset of $(2^X,{\bf
Z^+})$ contains a countable converging to $\{\emptyset\}$ subset
of $(2^X,{\bf Z^+})$.

\section{Hyperspace $(2^X,{\bf Z^+})$}

\begin{theorem}\label{th11} Assume that $\Phi, \Psi\in \{\Gamma, \Omega\}$, $\star\in \{1,fin\}$.  Then for a space $X$ the following statements are equivalent:

\begin{enumerate}

\item $X$ satisfies $S_{\star}(\Phi, \Psi)$;

\item $(2^X,{\bf Z^+})$ satisfies $S_{\star}(\mathcal{D}_{\Phi},
\mathcal{D}_{\Psi})$.

\end{enumerate}

\end{theorem}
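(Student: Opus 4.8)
The plan is to prove the theorem by establishing a correspondence (a dictionary) between open covers of $X$ of type $\Phi,\Psi$ and the appropriate families of subsets of $(2^X,{\bf Z^+})$, namely $\mathcal{D}_\Omega$ (dense subsets) and $\mathcal{D}_\Gamma$ (subsets converging to $\{\emptyset\}$). The crucial observation is already packaged in the earlier lemmas: by Lemma \ref{l3}(2), an open cover $\mathcal{U}$ of $X$ is an $\omega$-cover iff $\mathcal{U}^c$ is dense in $(2^X,{\bf Z^+})$, and by Lemma \ref{lem5}, $\mathcal{U}$ is a $\gamma$-cover iff $\mathcal{U}^c$ converges to $\{\emptyset\}$. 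So the map $\mathcal{U}\mapsto\mathcal{U}^c$ (complementation) is the translation device, sending $\Omega$ to $\mathcal{D}_\Omega$ and $\Gamma$ to $\mathcal{D}_\Gamma$. Since $\Phi,\Psi$ each range over $\{\Gamma,\Omega\}$, these two lemmas cover all four combinations uniformly.

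The implication I would do first is $(1)\Rightarrow(2)$. Let $\star\in\{1,fin\}$ and suppose $X$ satisfies $S_\star(\Phi,\Psi)$. Given a sequence $(A_n)$ of elements of $\mathcal{D}_\Phi$ in $(2^X,{\bf Z^+})$, I want to pull each $A_n$ back to an open cover of $X$. The natural move is to set $\mathcal{U}_n=A_n^c=\{F^c:F\in A_n\}$, so that $\mathcal{U}_n$ is a family of open subsets of $X$; by the relevant lemma (Lemma \ref{l3}(2) if $\Phi=\Omega$, Lemma \ref{lem5} if $\Phi=\Gamma$) each $\mathcal{U}_n$ is a $\Phi$-cover of $X$. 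Applying the selection hypothesis $S_\star(\Phi,\Psi)$ to $(\mathcal{U}_n)$ produces either a single element $b_n\in\mathcal{U}_n$ (for $\star=1$) or a finite set $B_n\subseteq\mathcal{U}_n$ (for $\star=fin$) whose union forms a $\Psi$-cover $\mathcal{V}$ of $X$. Taking complements back, the selected sets in $2^X$ (the $b_n^c$, resp. the finite sets $B_n^c$) have union $\mathcal{V}^c$, which by the appropriate lemma is a member of $\mathcal{D}_\Psi$. The reverse implication $(2)\Rightarrow(1)$ is entirely symmetric: start with $\Phi$-covers $(\mathcal{U}_n)$ of $X$, pass to the dense-or-convergent subsets $A_n=\mathcal{U}_n^c$ of $(2^X,{\bf Z^+})$, apply $S_\star(\mathcal{D}_\Phi,\mathcal{D}_\Psi)$, and complement the selected subsets back into a $\Psi$-cover of $X$.

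The one point that needs genuine care, and which I expect to be the \emph{main obstacle}, is the bookkeeping between ``families of open subsets'' and ``subsets of $2^X$'' when the covers are not assumed to be countable or when a $\Phi$-cover contains $X$ or repeats elements. The selection principles are indexed by $\mathbb{N}$, so I must ensure that when I convert a dense/convergent subset $A_n$ back to an open cover I really obtain an \emph{open cover} in the paper's sense (nontrivial, i.e. $X\notin\mathcal{U}_n$) and of the correct type. Here the remark following Lemma \ref{lem5} is decisive: since every converging-to-$\{\emptyset\}$ subset contains a countable one (and dually every $\omega$-cover is anyway reducible), I may pass to countable pieces freely, so the countable indexing in $S_\star$ matches up. A second subtlety is verifying that the complement operation behaves correctly on unions in the $\star=fin$ case, i.e. that $\bigcup_n B_n$ being a $\Psi$-cover corresponds exactly to $\bigcup_n B_n^c$ being the right element of $\mathcal{D}_\Psi$; this is where I would invoke Lemma \ref{l3}(2) or Lemma \ref{lem5} on the union rather than on the individual pieces.

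Finally, I would remark that nothing in the argument depends on which of the four pairs $(\Phi,\Psi)$ is chosen beyond selecting the matching lemma, so the proof is genuinely uniform. The complementation bijection $F\mapsto F^c$ between open subsets of $X$ and elements of $CL(X)$ is the engine throughout; the content is entirely in Lemmas \ref{l3} and \ref{lem5}, and the theorem is really a transcription of the selection hypotheses across this bijection.
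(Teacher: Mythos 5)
Your proposal is correct and takes essentially the same route as the paper's own proof: both use the complementation map $\mathcal{U}\mapsto\mathcal{U}^c$ together with Lemma~\ref{l3}(2) and Lemma~\ref{lem5} to translate $\Phi$-covers of $X$ into elements of $\mathcal{D}_{\Phi}$ and back, and then transcribe the selection hypothesis across this correspondence in each direction. The extra care you devote to nontriviality and countable indexing is sound but not something the paper's proof dwells on.
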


\begin{proof} We prove the theorem for $\star=fin$, the other
proofs being similar.

$(1)\Rightarrow(2)$. Let $(D_i : i\in \mathbb{N})$ be a sequence
of dense subsets of $(2^X,{\bf Z^+})$ such that $D_i\in
\mathcal{D}_{\Phi}$ for each $i\in \mathbb{N}$. Then $(D^c_i: i\in
\mathbb{N})$ is a sequence of open covers of $X$ such that
$D^c_i\in \Phi$ for each $i\in \mathbb{N}$. Since $X$ satisfies
$S_{fin}(\Phi, \Psi)$, there is a sequence $(A_{i}: i\in
\mathbb{N})$ of finite sets such that for each $i$,
$A_{i}\subseteq D^c_i$, and $\bigcup_{i\in\mathbb{N}}A_{i}\in
\Psi$. It follows that $\bigcup_{i\in\mathbb{N}}A^c_{i}\in
\mathcal{D}_{\Psi}$.

$(2)\Rightarrow(1)$. Let $(\mathcal{U}_n : n\in \mathbb{N})$ be a
sequence of open covers of $X$ such that $\mathcal{U}_n\in \Phi$.
For each $n$, $\mathcal{A}_n:=\mathcal{U}^c_n$ is a dense subset
of $(2^X,{\bf Z^+})$ such that $\mathcal{A}_n\in
\mathcal{D}_{\Phi}$. Applying that $(2^X,{\bf Z^+})$ satisfies
$S_{fin}(\mathcal{D}_{\Phi}, \mathcal{D}_{\Psi})$, there is a
sequence $(A_{n}: n\in \mathbb{N})$ of finite sets such that for
each $n$, $A_{n}\subseteq \mathcal{A}_n$, and
$\bigcup_{n\in\mathbb{N}}A_{n}\in \mathcal{D}_{\Psi}$. Then
$\bigcup_{n\in\mathbb{N}} U_n $ is an open cover of $X$ where
$U_n=A^c_n$ for each $n\in \mathbb{N}$ and
$\bigcup_{n\in\mathbb{N}} U_n\in \Psi$.

\end{proof}

\begin{corollary}(Theorem 5 in \cite{mkm1}) For a space $X$ the following are equivalent:

\begin{enumerate}

\item $X$ satisfies $S_{1}(\Omega, \Omega)$;

\item $(2^X,{\bf Z^+})$ satisfies $S_{1}(\mathcal{D}_{\Omega},
\mathcal{D}_{\Omega})$.

\end{enumerate}

\end{corollary}

\begin{corollary}(Theorem 13 in \cite{mkm1}) For a space $X$ the following are equivalent:

\begin{enumerate}

\item $X$ satisfies $S_{fin}(\Omega, \Omega)$;

\item $(2^X,{\bf Z^+})$ satisfies $S_{fin}(\mathcal{D}_{\Omega},
\mathcal{D}_{\Omega})$.

\end{enumerate}

\end{corollary}

\begin{theorem}\label{th12} Assume that $\Phi, \Psi\in \{\Gamma, \Omega, \}$, $\star\in \{1,fin\}$. Then for a space $X$ the following statements are equivalent:

\begin{enumerate}

\item Each open set $Y\subset X$ has the property $S_{\star}(\Phi,
\Psi)$;

\item  For each $E\in 2^X$, $(2^X,{\bf Z^+})$ satisfies
$S_{\star}(\Phi_{E}, \Psi_{E})$.

\end{enumerate}

\end{theorem}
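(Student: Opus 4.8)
The plan is to prove Theorem~\ref{th12} by reducing it to a ``local'' version of Theorem~\ref{th11}, transferring the selection principle from open subsets of $X$ to convergence/density properties of $2^X$ relative to a fixed closed set $E$. The natural first step is to pin down the undefined notation $\Phi_E$ and $\Psi_E$: for a fixed $E\in 2^X$ these should denote the families $\{\,\mathcal{A}\subseteq 2^X : E\in Cl_{Z^+}(\mathcal{A})\,\}$ when $\Phi=\Omega$, and $\{\,\mathcal{A}\subseteq 2^X : \mathcal{A}\ \text{converges to}\ E\ \text{in}\ (2^X,{\bf Z^+})\,\}$ when $\Phi=\Gamma$ --- i.e. the ``$\Omega_E$'' and ``$\Gamma_E$'' collections at the point $E$, analogous to $\Omega_x$ and $\Gamma_x$ from the excerpt. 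Since the whole mechanism must respect an arbitrary closed set $E$, the correct bridge is Lemmas~\ref{l1} and~\ref{lem2}, but applied with $Y=E^c$ (an open subset of $X$) rather than with $Y=X$; this is why the hypothesis in (1) ranges over \emph{all} open subsets $Y\subset X$ and not just $X$ itself.

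As in Theorem~\ref{th11}, I would prove only the case $\star=fin$ and remark that $\star=1$ is analogous. For $(1)\Rightarrow(2)$, fix $E\in 2^X$ and set $Y=E^c$, so $Y$ is open and $Y^c=E$. Given a sequence $(\mathcal{A}_i:i\in\mathbb{N})$ with $\mathcal{A}_i\in\Phi_E$, pass to complements: by Lemma~\ref{l1}(2) (for $\Phi=\Omega$) or Lemma~\ref{lem2}(2) (for $\Phi=\Gamma$) each $\mathcal{A}_i^c=\{A^c:A\in\mathcal{A}_i\}$ is an open cover of $Y$ lying in $\Phi$ relative to $Y$. Applying the hypothesis that $Y$ satisfies $S_{fin}(\Phi,\Psi)$ yields finite sets $B_i\subseteq\mathcal{A}_i^c$ with $\bigcup_i B_i\in\Psi$ (as a cover of $Y$); taking complements again and invoking the reverse direction of the same lemma gives finite $A_i\subseteq\mathcal{A}_i$ with $\bigcup_i A_i\in\Psi_E$, which is exactly what $S_{fin}(\Phi_E,\Psi_E)$ requires. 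The direction $(2)\Rightarrow(1)$ runs symmetrically: given an open cover sequence of an arbitrary open $Y\subset X$, set $E=Y^c$, translate each cover into a member of $\Phi_E$ via the lemmas, apply $S_{fin}(\Phi_E,\Psi_E)$ in $(2^X,{\bf Z^+})$, and translate back.

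The main obstacle I anticipate is purely in the bookkeeping of the complement correspondence when $Y\ne X$. Lemmas~\ref{l1} and~\ref{lem2} are stated for an open cover $\mathcal{U}$ of an open set $Y$, and the equivalences characterize $Y^c$ belonging to the $Z^+$-closure (resp. being the $Z^+$-limit) of $\mathcal{U}^c$; I must be careful that the sets $A^c$ appearing as complements are taken in $X$ (so that they are genuine members of $2^X$) and that $\{A^c : A\in\mathcal{A}_i\}$ really covers $Y=E^c$ rather than all of $X$. Concretely, for $A\in 2^X$ one has $A\in\Omega_E$ in the $Z^+$-sense precisely when $A^c$ ranges over an $\omega$-cover of $E^c$, and this is exactly the content of Lemma~\ref{l1}(2) with $Y=E^c$; once this dictionary is fixed, the selection step is a verbatim repetition of the argument in Theorem~\ref{th11}. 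A secondary point requiring a line of justification is that finiteness of the selected subfamilies is preserved under complementation, which is immediate, and that $\bigcup_i A_i$ remains a subset of $2^X$ indexed over open subsets of $Y$, so no element $X$ (which is excluded from covers) is introduced.
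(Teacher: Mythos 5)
Your proposal is correct and follows essentially the same route as the paper: the same complement dictionary identifying $\Phi_E$, $\Psi_E$ with $\omega$-covers/$\gamma$-covers of the open set $E^c$ (Lemmas~\ref{l1} and~\ref{lem2} applied with $Y=E^c$), followed by the same selection-and-translate argument as in Theorem~\ref{th11}. The only cosmetic difference is that you write out the case $\star=fin$ while the paper writes out $\star=1$, each deferring the other case as analogous.
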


\begin{proof} We prove the theorem for $\star=1$, the other
proofs being similar.

$(1)\Rightarrow(2)$. Let $E\in 2^X$ and let $(\mathcal{A}_n : n\in
\mathbb{N})$ be a sequence
 such that $\mathcal{A}_n\in \Phi_{E}$ for each $n\in \mathbb{N}$.
  Then $(\mathcal{A}^c_n:
n\in \mathbb{N})$ is a sequence of open covers of $E^c$ such that
$\mathcal{A}^c_n\in \Phi$ for each $n\in \mathbb{N}$. Since $E^c$
has the property $S_{\star}(\Phi, \Psi)$, there is a sequence
$(A^c_n : n\in \mathbb{N})$ such that $A^c_n\in \mathcal{A}^c_n$
for each $n\in \mathbb{N}$ and $\{A^c_n : n\in \mathbb{N}\}$ is
open cover of $E^c$ such that $\{A^c_n : n\in
\mathbb{N}\}\in\Psi$. It follows that $\{A_n : n\in
\mathbb{N}\}\in  \Psi_{E}$.

$(2)\Rightarrow(1)$. Let $Y$ be an open subset of $X$ and let
$(\mathcal{F}_n : n\in \mathbb{N})$ be a sequence of open covers
of $Y$ such that $\mathcal{F}_n\in \Phi_Y$ (where $\Phi_Y$ is the
$\Phi$ family of covers of $Y$). Let $E=X\setminus Y$. Put
$\mathcal{A}_n=\mathcal{F}_n^c$
 for each $n\in \mathbb{N}$. Then $\mathcal{A}_n\subset 2^X$ and
$\mathcal{A}_n\in \Phi_{E}$ for each $n\in \mathbb{N}$. Since, by
(2), $(2^X,{\bf Z^+})$ satisfies $S_{1}(\Phi_{E}, \Psi_{E})$,
there is a sequence $(A_n : n\in \mathbb{N})$ such that $A_n\in
\mathcal{A}_n$ for each $n\in \mathbb{N}$ and $\{A_n : n\in
\mathbb{N}\}\in \Psi_{E}$. It follows that $\{F_n: F_n=A^c_n, n\in
\mathbb{N}\}\in \Psi$.

\end{proof}

 \begin{corollary}(Theorem 3 in \cite{koc1}) For a space $X$ the following
statements are equivalent:

\begin{enumerate}

\item Each open set $Y\subset X$ has the property $S_1(\Omega,
\Gamma)$;

\item $(2^X,{\bf Z^+})$  is Fr$\acute{e}$chet-Urysohn;

\item $(2^X,{\bf Z^+})$  is strongly Fr$\acute{e}$chet-Urysohn.

\end{enumerate}

\end{corollary}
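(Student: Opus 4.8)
The plan is to read the two nontrivial equivalences off Theorem \ref{th12} and then translate the two Fréchet-type properties of the point $E$ in the hyperspace into covering statements about the open set $E^c$, where the Gerlits--Nagy theorem supplies the missing link. Taking $\Phi=\Omega$, $\Psi=\Gamma$ and $\star=1$ in Theorem \ref{th12} already yields that statement (1) is equivalent to the condition $(\ast)$: for every $E\in 2^X$ the space $(2^X,{\bf Z^+})$ satisfies $S_1(\Omega_E,\Gamma_E)$, where $\Omega_E$ and $\Gamma_E$ are the local families at the point $E$. So it remains to sandwich $(\ast)$ between the two Fréchet conditions (2) and (3).

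First I would dispose of the easy implications. Since strong Fréchet--Urysohnness of $(2^X,{\bf Z^+})$ at $E$ is exactly the restriction of $S_1(\Omega_E,\Gamma_E)$ to \emph{decreasing} sequences drawn from $\Omega_E$, the condition $(\ast)$ trivially implies (3); combined with $(1)\Leftrightarrow(\ast)$ this gives $(1)\Rightarrow(3)$. The implication $(3)\Rightarrow(2)$ is the standard fact that every strongly Fréchet--Urysohn space is Fréchet--Urysohn. Thus the whole corollary reduces to the single implication $(2)\Rightarrow(1)$.

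To prove $(2)\Rightarrow(1)$ I would set up the dictionary between $(2^X,{\bf Z^+})$ and covers of open sets provided by Lemmas \ref{l1}, \ref{lem2} and \ref{l3}. Fix $E\in 2^X$, put $Y=E^c$, and let $A\subseteq 2^X$ with $E\in Cl_{Z^+}(A)\setminus A$. Unwinding the basic ${\bf Z^+}$-neighbourhoods $(F^c)^+$ of $E$ (with $F$ finite and $F\cap E=\emptyset$) shows, exactly as in Lemma \ref{l1}(2), that this is equivalent to $A^c=\{F^c:F\in A\}$ being an open $\omega$-cover of $Y$; and by Lemma \ref{lem2}(2) a countable subfamily $A'\subseteq A$ lies in $\Gamma_E$ precisely when $(A')^c$ is a $\gamma$-cover of $Y$. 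Hence Fréchet--Urysohnness of $(2^X,{\bf Z^+})$ at every point $E$ says exactly that every open $\omega$-cover of every open $Y\subseteq X$ contains a countable $\gamma$-subcover, i.e. each open $Y$ has the Gerlits--Nagy property $\gamma$. Invoking the Gerlits--Nagy theorem, which asserts that property $\gamma$ is equivalent to $S_1(\Omega,\Gamma)$, upgrades this to statement (1) and closes the cycle $(1)\Rightarrow(3)\Rightarrow(2)\Rightarrow(1)$.

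The main obstacle is precisely this last step: statement (2) is a \emph{single-set} condition (one convergent sequence extracted from one dense-type set), whereas (1) and $(\ast)$ are genuine \emph{sequential} selection principles. Bridging the gap is not formal---it rests on the nontrivial Gerlits--Nagy collapse between the ``single $\omega$-cover'' property $\gamma$ and the one-element selection $S_1(\Omega,\Gamma)$. I would take care that the translation via Lemma \ref{l1}(2) is valid for an arbitrary $A\subseteq 2^X$ and not only for families already presented as complements of covers, which is why the explicit computation with the basic neighbourhoods $(F^c)^+$ should be carried out at the very start of the $(2)\Rightarrow(1)$ argument.
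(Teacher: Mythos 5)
Your proposal is correct, and its skeleton coincides with the paper's: the paper presents this statement as an immediate corollary of Theorem~\ref{th12}, instantiated with $\Phi=\Omega$, $\Psi=\Gamma$, $\star=1$, which is exactly where you begin. The difference is in what is left implicit. Theorem~\ref{th12} only delivers the equivalence of (1) with the pointwise selection property $S_1(\Omega_E,\Gamma_E)$ for all $E\in 2^X$; the paper never proves the bridge from that property to statements (2) and (3), delegating the full equivalence to the cited Theorem 3 of \cite{koc1}. You supply that bridge explicitly: the easy chain ($S_1(\Omega_E,\Gamma_E)$ at every $E$) $\Rightarrow$ strongly Fr\'{e}chet--Urysohn $\Rightarrow$ Fr\'{e}chet--Urysohn, and then the genuinely nontrivial return implication $(2)\Rightarrow(1)$, obtained by translating Fr\'{e}chet--Urysohnness at $E$ into the statement that every open $\omega$-cover of the open set $E^c$ contains a countable $\gamma$-subcover, and then invoking the Gerlits--Nagy theorem ($\gamma$-property $\Leftrightarrow$ $S_1(\Omega,\Gamma)$). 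That is precisely the missing ingredient (and is essentially what the cited proof of Ko\v{c}inac does); since the Gerlits--Nagy implication is purely combinatorial, it applies to the arbitrary Hausdorff spaces considered here, so no hypothesis is violated. Your flagged caveat is also the right one: an arbitrary $A\subseteq 2^X$ clustering at $E$ need not literally be the set of complements of an open cover of $E^c$ --- members $C\subseteq E$ lie in every basic ${\bf Z^+}$-neighbourhood $(F^c)^+$ of $E$ --- but such members are harmless, as they either can be discarded or produce trivially convergent sequences, so the translation via the neighbourhood computation behind Lemmas~\ref{l1} and~\ref{lem2} goes through. In short: same route as the paper, with the paper's unproved step filled in correctly.
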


\begin{corollary} (Theorem 1 in \cite{mkm1}) For a space $X$ the following
are equivalent:

\begin{enumerate}

\item Each open set $Y\subset X$ satisfies $S_1(\Omega,\Omega)$;

\item  $(2^X,{\bf Z^+})$ has countable strong fan tightness (For
each $E\in 2^X$, $(2^X,{\bf Z^+})$ satisfies
$S_{1}(\Omega_{E},\Omega_{E}))$.

\end{enumerate}

\end{corollary}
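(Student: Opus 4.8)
The plan is to read this corollary off as the special case of Theorem~\ref{th12} in which $\Phi=\Psi=\Omega$ and $\star=1$. Since Theorem~\ref{th12} is already available, no fresh selection argument is needed; the work consists entirely in matching the two formulations, so the approach is purely one of specialization and bookkeeping.

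The first key step is to record the definition of countable strong fan tightness and align it with the selection principle. A space $Z$ has countable strong fan tightness if at every point $z\in Z$ it satisfies $S_1(\Omega_z,\Omega_z)$, where $\Omega_z=\{A\subseteq Z: z\in\overline{A}\setminus A\}$. Taking $Z=(2^X,{\bf Z^+})$ and letting $z$ range over the points $E\in 2^X$, this is exactly the assertion that $(2^X,{\bf Z^+})$ satisfies $S_1(\Omega_E,\Omega_E)$ for every $E\in 2^X$; this is statement~(2) of the corollary and, verbatim, statement~(2) of Theorem~\ref{th12} under the substitution $\Phi=\Psi=\Omega$, $\star=1$. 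The second key step is the observation that statement~(1) of the corollary, that each open $Y\subset X$ satisfies $S_1(\Omega,\Omega)$, is literally statement~(1) of Theorem~\ref{th12} for the same choice of parameters. Hence the equivalence (1)$\Leftrightarrow$(2) is precisely the equivalence furnished by Theorem~\ref{th12}, and invoking that theorem completes the proof.

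I do not expect any genuine obstacle here, since the substantive content has already been absorbed into Theorem~\ref{th12}, whose proof in turn rests on Lemma~\ref{l1}(2) to translate ``$\mathcal{A}_n\in\Omega_E$ in $(2^X,{\bf Z^+})$'' into ``$\mathcal{A}_n^c$ is an $\omega$-cover of $E^c$''. The only care needed is the definitional identification of countable strong fan tightness with the pointwise principle $S_1(\Omega_E,\Omega_E)$, which the corollary's own parenthetical remark already spells out; beyond this matching there is nothing further to prove.
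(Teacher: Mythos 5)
Your proposal is correct and coincides with the paper's own route: the paper states this result as an immediate corollary of Theorem~\ref{th12} (with no separate proof), which is exactly your specialization $\Phi=\Psi=\Omega$, $\star=1$, together with the definitional identification of countable strong fan tightness with ``$S_1(\Omega_E,\Omega_E)$ at every $E\in 2^X$'' that the corollary's parenthetical already supplies.
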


\begin{corollary} (Theorem 9 in \cite{mkm1}) For a space $X$ the following are
equivalent:

\begin{enumerate}

\item Each open set $Y\subset X$ satisfies
$S_{fin}(\Omega,\Omega)$;

\item  $(2^X,{\bf Z^+})$ has countable fan tightness (For each
$E\in 2^X$, $(2^X,{\bf Z^+})$ satisfies
$S_{fin}(\Omega_{E},\Omega_{E}))$.

\end{enumerate}

\end{corollary}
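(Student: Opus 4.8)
The plan is to recognize this corollary as the specialization $\Phi=\Psi=\Omega$, $\star=fin$ of Theorem \ref{th12}, so that essentially nothing new has to be proved beyond matching the terminology. First I would recall that a topological space $Z$ is said to have \emph{countable fan tightness} precisely when, for every point $z\in Z$, the selection hypothesis $S_{fin}(\Omega_z,\Omega_z)$ holds, where $\Omega_z=\{A\subseteq Z : z\in\overline{A}\setminus A\}$. Reading this with $Z=(2^X,{\bf Z^+})$ and $z=E$ ranging over $2^X$, condition (2) of the corollary is exactly the assertion that for each $E\in 2^X$ the space $(2^X,{\bf Z^+})$ satisfies $S_{fin}(\Omega_E,\Omega_E)$, which is literally the parenthetical clarification already supplied in the statement.

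Next I would verify that this point-based family $\Omega_E$ in the hyperspace agrees with the cover-based family $\Phi_E$ appearing in statement (2) of Theorem \ref{th12} when $\Phi=\Omega$. Under the correspondence $\mathcal{A}\mapsto\mathcal{A}^c$, a subfamily $\mathcal{A}\subset 2^X$ belongs to $\Phi_E$ in the sense of Theorem \ref{th12} exactly when $\mathcal{A}^c$ is an $\omega$-cover of the open set $E^c=X\setminus E$. By Lemma \ref{l1}(2) applied with $Y=E^c$ and $\mathcal{U}=\mathcal{A}^c$, this holds iff $(E^c)^c=E\in Cl_{Z^+}((\mathcal{A}^c)^c)=Cl_{Z^+}(\mathcal{A})$, that is, iff $E\in\overline{\mathcal{A}}$ in $(2^X,{\bf Z^+})$. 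Together with $E\notin\mathcal{A}$ this is precisely $\mathcal{A}\in\Omega_E$ in the point sense, so the two readings of $\Omega_E$ coincide.

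With these identifications in place, the equivalence of (1) and (2) is immediate: statement (1) of the corollary is verbatim statement (1) of Theorem \ref{th12} with $\Phi=\Psi=\Omega$ and $\star=fin$, and statement (2) is, by the previous paragraph, verbatim statement (2) of Theorem \ref{th12} with the same parameters. Hence the corollary follows directly from Theorem \ref{th12}.

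I expect no real obstacle here; the only point requiring care is the dictionary between the cover-based description $\Phi_E$ used in the theorem and the point-based description $\Omega_E$ used in the definition of countable fan tightness, which is settled by Lemma \ref{l1}(2) as above. One should also double-check that the notion of countable fan tightness invoked is indeed the one given by $S_{fin}(\Omega_z,\Omega_z)$ at each point, which is standard and matches the parallel ``strong fan tightness'' $=S_1(\Omega_z,\Omega_z)$ used in the preceding corollary.
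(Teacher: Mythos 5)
Your proof is correct and matches the paper's approach: the paper states this corollary without separate proof as the instance $\Phi=\Psi=\Omega$, $\star=fin$ of Theorem \ref{th12}, which is precisely your reduction. Your explicit verification via Lemma \ref{l1}(2) that the point-based family $\Omega_E$ in $(2^X,{\bf Z^+})$ coincides with the cover-based family used in Theorem \ref{th12} is exactly the dictionary the paper uses implicitly (its lemmas are invoked ``sometimes without explicit reference'').
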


Recall that a space is {\it perfect} if every open subset is an
$F_{\sigma}$-subset \cite{hm}. Clearly every semi-stratifiable
space is perfect.

Note that all properties in the Scheepers Diagram
(\cite{jmss,sch3}) are hereditary for $F_{\sigma}$-subsets
(Corollary 2.4 in \cite{ot}).

\begin{proposition} Assume that $\Phi, \Psi\in \{\Gamma, \Omega, \}$, $\star\in \{1,fin\}$. Then for a perfect topological
space $X$ the following statements are equivalent:

\begin{enumerate}

\item $X$ satisfies $S_{\star}(\Phi, \Psi)$;

\item Each open set $Y\subset X$ has the property $S_{\star}(\Phi,
\Psi)$.

\end{enumerate}

\end{proposition}

\begin{theorem} Assume that $\Phi, \Psi\in \{\Gamma, \Omega\}$, $\star\in \{1,fin\}$.  Then for a perfect space $X$ the following statements are equivalent:

\begin{enumerate}

\item $X$ satisfies $S_{\star}(\Phi, \Psi)$;

\item $(2^X,{\bf Z^+})$ satisfies $S_{\star}(\mathcal{D}_{\Phi},
\mathcal{D}_{\Psi})$;

\item  For each $E\in 2^X$, $(2^X,{\bf Z^+})$ satisfies
$S_{\star}(\Phi_{E}, \Psi_{E})$.

\end{enumerate}

\end{theorem}

Clearly that every perfectly normal space is perfect.

\begin{corollary} Assume that $\Phi, \Psi\in \{\Gamma, \Omega\}$, $\star\in \{1,fin\}$.  Then for a perfectly normal
space $X$ the following statements are equivalent:

\begin{enumerate}

\item $X$ satisfies $S_{\star}(\Phi, \Psi)$;

\item $(2^X,{\bf Z^+})$ satisfies $S_{\star}(\mathcal{D}_{\Phi},
\mathcal{D}_{\Psi})$;

\item  For each $E\in 2^X$, $(2^X,{\bf Z^+})$ satisfies
$S_{\star}(\Phi_{E}, \Psi_{E})$.

\end{enumerate}

\end{corollary}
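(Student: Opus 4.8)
The plan is to deduce this corollary directly from the preceding Theorem, which already establishes the equivalence of (1), (2) and (3) for every \emph{perfect} space $X$. Since the three conditions listed in the corollary are literally the same as those in that theorem, the only thing that needs to be verified is that the hypothesis of perfect normality is at least as strong as the perfectness used there.

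First I would recall the definition of perfect adopted in this paper: a space is perfect if every open subset is an $F_{\sigma}$-subset. In a perfectly normal space every closed set is a $G_{\delta}$-set, hence by passing to complements every open set is an $F_{\sigma}$-set, which is exactly the required condition. (This is precisely the content of the remark ``every perfectly normal space is perfect'' stated just before the corollary.) Thus every perfectly normal $X$ is perfect in the sense of the theorem.

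Having established that $X$ is perfect, I would simply invoke the preceding Theorem applied to $X$, obtaining the chain $(1)\Leftrightarrow(2)\Leftrightarrow(3)$ verbatim. I do not expect any genuine obstacle here: the whole corollary reduces to the elementary implication ``perfectly normal $\Rightarrow$ perfect'' combined with the quoted theorem, and no further selection-theoretic argument is needed. At most one would spend a single line confirming the $G_{\delta}$-characterization of perfect normality if one preferred not to appeal to the preceding remark.
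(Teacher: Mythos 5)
Your proposal is correct and matches the paper's intended argument exactly: the corollary is stated immediately after the theorem for perfect spaces together with the remark that every perfectly normal space is perfect, so the paper's (implicit) proof is precisely your reduction. Your extra line justifying ``perfectly normal $\Rightarrow$ perfect'' via complements of $G_{\delta}$ closed sets is a harmless and accurate elaboration of that remark.
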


\section{Hyperspace $(2^X,{\bf F^+})$}

Note that $(2^X,{\bf F^+})$ satisfies the selection principle
$S_{fin}(\mathcal{O}, \mathcal{O})$ for any space $X$, because
$(2^X,{\bf F^+})$ is always compact (see \cite{fe}).

$S_{fin}(\mathcal{O}, \mathcal{O})$ property is called the {\it
Menger property} (see \cite{jmss,sch3}).

\begin{lemma}\label{lem6} Let $X$ be a space and let $\mathcal{U}=\{U_{\alpha}: \alpha\in \Lambda\}$ be an open
cover of $X$. Then the following are equivalent:

\begin{enumerate}

\item $\mathcal{U}$ is an $\gamma_k$-cover of $X$;

\item $\mathcal{U}^c$ converges to $\{\emptyset\}$ in $(2^X, {\bf
F}^+)$.

\end{enumerate}

\end{lemma}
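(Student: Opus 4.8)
The plan is to prove Lemma \ref{lem6} by close analogy with Lemma \ref{lem5}, simply replacing the role of finite subsets (and the $\omega$/$\gamma$ machinery tied to the ${\bf Z^+}$-topology) by that of compact subsets (and the $k$/$\gamma_k$ machinery tied to the ${\bf F^+}$-topology). Unwinding the Definition, $\mathcal{U}^c$ converges to $\{\emptyset\}$ in $(2^X,{\bf F^+})$ means: for every $F\in 2^X$ and every ${\bf F^+}$-neighborhood $W$ of $F$, all but finitely many $U_\alpha^c$ lie in $W$. Since the basic ${\bf F^+}$-neighborhoods of $F$ have the form $(K^c)^+=\{G\in 2^X : G\subseteq K^c\}$ with $K\in\mathbb{K}(X)$ and $F\subseteq K^c$, the statement ``$U_\alpha^c\in (K^c)^+$'' is exactly ``$U_\alpha^c\subseteq K^c$'', i.e. ``$K\subseteq U_\alpha$''. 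So convergence to $\{\emptyset\}$ translates precisely into: for each compact $K\subseteq X$, the set $\{\alpha : K\nsubseteq U_\alpha\}$ is finite.

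For the direction $(1)\Rightarrow(2)$, I would fix $F\in 2^X$ and a basic neighborhood $(K^c)^+$ of $F$ (so $F\subseteq K^c$, equivalently $K\subseteq F^c$), with $K$ compact. By the definition of a $\gamma_k$-cover applied to the compact set $K$, the set $\{\alpha : K\nsubseteq U_\alpha\}$ is finite; by the translation above this is exactly $\{\alpha : U_\alpha^c\nsubseteq (K^c)^+\}$, which is therefore finite. Since every neighborhood of $F$ contains such a basic one, and $F$ was arbitrary, $\mathcal{U}^c$ converges to every $F\in 2^X$, i.e. to $\{\emptyset\}$. Conversely, for $(2)\Rightarrow(1)$, I would take an arbitrary compact $K\subseteq X$; since $\mathcal{U}$ covers $X$ we may pick $F=\emptyset$ (or any closed set disjoint from $K$), and then $(K^c)^+$ is an ${\bf F^+}$-neighborhood of $F$, so convergence forces $\{\alpha : U_\alpha^c\nsubseteq(K^c)^+\}=\{\alpha : K\nsubseteq U_\alpha\}$ to be finite; as $K$ was arbitrary, $\mathcal{U}$ is a $\gamma_k$-cover. (One should also note $\mathcal{U}$ must be infinite, which follows since $X$ is non-compact and $\gamma_k$-covers are required to be infinite.)

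The only genuine step needing care is the neighborhood translation $U_\alpha^c\in (K^c)^+ \Leftrightarrow K\subseteq U_\alpha$, together with the observation that the sets $(K^c)^+$ for $K\in\mathbb{K}(X)$ form a neighborhood base at every closed $F$ in the upper Fell topology, so that it suffices to test convergence against these basic neighborhoods. This mirrors exactly Lemma \ref{l1}(1) and Lemma \ref{lem2}(1), where the same $k$-cover/${\bf F^+}$ correspondence is exploited; indeed Lemma \ref{lem6} can be viewed as the ``uniform over all $F\in 2^X$'' strengthening of the single-point convergence in Lemma \ref{lem2}(1). I anticipate no real obstacle, since the compactness of $K$ is used only to guarantee $(K^c)^+$ is genuinely a ${\bf F^+}$-open set; the combinatorial heart is the elementary set-complementation identity $U_\alpha^c\subseteq K^c \Leftrightarrow K\subseteq U_\alpha$.
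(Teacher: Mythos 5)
Your core argument is correct, and there is in fact no written proof in the paper to compare it against: Lemma \ref{lem6} is stated bare (as is its ${\bf Z^+}$-counterpart, Lemma \ref{lem5}), the author evidently regarding it as a routine consequence of the definitions, in parallel with Lemma \ref{lem2}(1). Your verification is exactly that intended routine unwinding: the translation $U_\alpha^c\in (K^c)^+ \Leftrightarrow U_\alpha^c\subseteq K^c \Leftrightarrow K\subseteq U_\alpha$, the reduction to basic neighborhoods (the base $\{(K^c)^+ : K\in\mathbb{K}(X)\}\cup\{2^X\}$ is closed under finite intersections, and the neighborhood $2^X$ is trivial), and the choice $F=\emptyset\in 2^X$ in the converse direction are all sound.

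The one loose step is your parenthetical handling of infiniteness in $(2)\Rightarrow(1)$. A $\gamma_k$-cover is by definition infinite, and your justification --- ``follows since $X$ is non-compact and $\gamma_k$-covers are required to be infinite'' --- is circular (the requirement is what must be verified, not a reason) and the appeal to non-compactness does not work: a non-compact space can have a finite open cover $\mathcal{U}$ with $X\notin\mathcal{U}$ (e.g. $\mathbb{R}=(-\infty,1)\cup(0,\infty)$), and any finite family satisfies the index-set condition ``$\{\alpha : U_\alpha^c\notin W\}$ is finite for every neighborhood $W$ of every $F$'' vacuously, while failing to be a $\gamma_k$-cover. The correct source of infiniteness is the paper's convergence convention: a subset that converges is, by the definition preceding $\Gamma_x$, required to be infinite, so hypothesis (2) should be read as already including that $\mathcal{U}^c$ (hence $\mathcal{U}$) is infinite. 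Under that reading your proof is complete; under the purely literal index-set reading of the Definition, the equivalence genuinely fails for finite covers, which is a defect of the paper's formulation rather than of your argument --- but you should replace your stated justification by this definitional remark.
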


\begin{theorem} Assume that $\Phi, \Psi\in \{\Gamma_k, \mathcal{K}\}$, $\star\in \{1,fin\}$.
Then for a space $X$ the following statements are equivalent:

\begin{enumerate}

\item $X$ satisfies $S_{\star}(\Phi, \Psi)$;

\item $(2^X,{\bf F^+})$ satisfies $S_{\star}(\mathcal{D}_{\Phi},
\mathcal{D}_{\Psi})$.

\end{enumerate}

\end{theorem}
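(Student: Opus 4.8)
The plan is to transcribe the proof of Theorem~\ref{th11} into the ${\bf F^+}$ setting, substituting the Fell-topology lemmas for their ${\bf Z^+}$-counterparts. The governing dictionary is that a subset $\mathcal{A}\subseteq 2^X$ is dense in $(2^X,{\bf F^+})$ exactly when $\mathcal{A}^c$ is a $k$-cover of $X$ (Lemma~\ref{l3}(1)), and $\mathcal{A}$ converges to $\{\emptyset\}$ in $(2^X,{\bf F^+})$ exactly when $\mathcal{A}^c$ is a $\gamma_k$-cover of $X$ (Lemma~\ref{lem6}). Thus $\mathcal{D}_{\mathcal{K}}$ plays the role of the family of dense subsets and $\mathcal{D}_{\Gamma_k}$ the role of the family of subsets converging to $\{\emptyset\}$, exactly as $\mathcal{D}_{\Omega}$ and $\mathcal{D}_{\Gamma}$ did in the ${\bf Z^+}$-case. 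I would prove the equivalence for $\star=fin$, the case $\star=1$ being verbatim the same with finite sets replaced by singletons.

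For $(1)\Rightarrow(2)$, I would take a sequence $(D_i:i\in\mathbb{N})$ with each $D_i\in\mathcal{D}_{\Phi}$. Applying the lemma attached to $\Phi$ (Lemma~\ref{l3}(1) if $\Phi=\mathcal{K}$, Lemma~\ref{lem6} if $\Phi=\Gamma_k$), the complements $(D^c_i:i\in\mathbb{N})$ form a sequence of open covers of $X$ lying in $\Phi$. The hypothesis $S_{fin}(\Phi,\Psi)$ then yields finite sets $A_i\subseteq D^c_i$ with $\bigcup_{i}A_i\in\Psi$, and the lemma attached to $\Psi$ sends this back to $\bigcup_i A^c_i\in\mathcal{D}_{\Psi}$, as required. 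The converse $(2)\Rightarrow(1)$ runs symmetrically: starting from covers $\mathcal{U}_n\in\Phi$, the sets $\mathcal{A}_n:=\mathcal{U}^c_n$ belong to $\mathcal{D}_{\Phi}$ by the same lemma; the selection property of $(2^X,{\bf F^+})$ produces finite $A_n\subseteq\mathcal{A}_n$ with $\bigcup_n A_n\in\mathcal{D}_{\Psi}$; and passing again to complements recovers a member of $\Psi$ selected from the $\mathcal{U}_n$.

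Since the entire translation between covering properties of $X$ and selection properties of the hyperspace is already encapsulated in Lemmas~\ref{l3}(1) and~\ref{lem6}, the argument is essentially formal and I expect no serious obstacle. The one point demanding care is the bookkeeping when $\Phi\neq\Psi$ (for instance $\Phi=\mathcal{K}$, $\Psi=\Gamma_k$): one must invoke the density lemma on the $\mathcal{K}$-side and the convergence lemma on the $\Gamma_k$-side, and verify that the complement map $\mathcal{A}\mapsto\mathcal{A}^c$ respects both correspondences simultaneously. Beyond matching each symbol to its correct lemma, the ${\bf F^+}$-proof is structurally identical to that of Theorem~\ref{th11}.
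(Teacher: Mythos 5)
Your proposal is correct and is exactly what the paper intends: the paper's own proof of this theorem consists of the single remark that it is ``similar to the proof of Theorem~\ref{th11}'', and your transcription---replacing Lemma~\ref{l3}(2) and Lemma~\ref{lem5} by Lemma~\ref{l3}(1) and Lemma~\ref{lem6} so that $\mathcal{D}_{\mathcal{K}}$ and $\mathcal{D}_{\Gamma_k}$ play the roles of dense and converging-to-$\{\emptyset\}$ families in $(2^X,{\bf F^+})$---is precisely the intended argument, including the correct handling of the mixed case $\Phi\neq\Psi$.
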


\begin{proof} The proof is similar to the proof of Theorem
\ref{th11}.

\end{proof}

\begin{corollary}(Theorem 4 in \cite{mkm1}) For a space $X$ the following are equivalent:

\begin{enumerate}

\item $X$ satisfies $S_{1}(\mathcal{K}, \mathcal{K})$;

\item $(2^X,{\bf F^+})$ satisfies $S_{1}(\mathcal{D}_{\Omega},
\mathcal{D}_{\Omega})$.

\end{enumerate}

\end{corollary}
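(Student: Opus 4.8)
The plan is to recognize that this corollary is exactly the instance $\Phi=\Psi=\mathcal{K}$, $\star=1$ of the immediately preceding theorem, once $\mathcal{D}_{\Omega}$ is read as the family of dense subsets of $(2^X,{\bf F^+})$ (the general convention that $\mathcal{D}_{\Omega}$ denotes dense subsets of $(2^X,\tau)$, which in the $k$-cover setting is precisely $\mathcal{D}_{\mathcal{K}}$). The whole argument therefore rests on the duality between $k$-covers of $X$ and dense subsets of $(2^X,{\bf F^+})$ furnished by Lemma \ref{l3}(1). I would nevertheless write out both implications directly, mirroring the proof of Theorem \ref{th11} with Lemma \ref{l3}(1) replacing the $\Omega$/$\mathcal{D}_{\Omega}$ duality used there.

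For $(1)\Rightarrow(2)$, I would start from a sequence $(D_i:i\in\mathbb{N})$ of dense subsets of $(2^X,{\bf F^+})$. By Lemma \ref{l3}(1), each family $D_i^c=\{F^c:F\in D_i\}$ of open sets is a $k$-cover of $X$, so $(D_i^c:i\in\mathbb{N})$ is a sequence of elements of $\mathcal{K}$. Applying $S_1(\mathcal{K},\mathcal{K})$ yields open sets $U_i\in D_i^c$ with $\{U_i:i\in\mathbb{N}\}\in\mathcal{K}$. Setting $b_i=U_i^c\in D_i$, the family $\{b_i^c:i\in\mathbb{N}\}=\{U_i:i\in\mathbb{N}\}$ is a $k$-cover, so Lemma \ref{l3}(1) returns that $\{b_i:i\in\mathbb{N}\}$ is dense in $(2^X,{\bf F^+})$, which is exactly the selection demanded by $S_1(\mathcal{D}_{\Omega},\mathcal{D}_{\Omega})$.

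For $(2)\Rightarrow(1)$, I would reverse the correspondence: given a sequence $(\mathcal{U}_n:n\in\mathbb{N})$ of $k$-covers of $X$, Lemma \ref{l3}(1) makes each $\mathcal{U}_n^c$ a dense subset of $(2^X,{\bf F^+})$. Applying $S_1(\mathcal{D}_{\Omega},\mathcal{D}_{\Omega})$ produces $A_n\in\mathcal{U}_n^c$ with $\{A_n:n\in\mathbb{N}\}$ dense, whence $\{A_n^c:n\in\mathbb{N}\}$ is a $k$-cover by Lemma \ref{l3}(1), giving the required selection for $S_1(\mathcal{K},\mathcal{K})$.

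I do not expect a genuine obstacle here: the complementation $F\mapsto X\setminus F$ is an involution carrying closed sets to open sets, and Lemma \ref{l3}(1) transports the $k$-cover property to density in both directions, so each single-valued selection on one side is literally the image of a selection on the other. The only point needing care is the bookkeeping that the selected family stays a nontrivial cover, equivalently that no selected closed set is $\emptyset$; this is already built into the statement of Lemma \ref{l3}(1) and should cost at most a line.
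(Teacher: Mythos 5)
Your proposal is correct and follows exactly the paper's route: the corollary is the instance $\Phi=\Psi=\mathcal{K}$, $\star=1$ of the preceding theorem, whose proof (as the paper indicates) mirrors that of Theorem \ref{th11} with the $k$-cover/dense-set duality of Lemma \ref{l3}(1) in place of the $\omega$-cover duality. Your identification of $\mathcal{D}_{\Omega}$ as the dense subsets of $(2^X,{\bf F^+})$, i.e. $\mathcal{D}_{\mathcal{K}}$, and your two written-out implications are precisely the paper's argument specialized to this case.
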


\begin{corollary}(Theorem 12 in \cite{mkm1}) For a space $X$ the following are equivalent:

\begin{enumerate}

\item $X$ satisfies $S_{fin}(\mathcal{K}, \mathcal{K})$;

\item $(2^X,{\bf F^+})$ satisfies $S_{fin}(\mathcal{D}_{\Omega},
\mathcal{D}_{\Omega})$.

\end{enumerate}

\end{corollary}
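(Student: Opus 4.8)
The plan is to read this corollary as the special case of the preceding Theorem obtained by setting $\Phi=\Psi=\mathcal{K}$ and $\star=fin$. That Theorem asserts, for all $\Phi,\Psi\in\{\Gamma_k,\mathcal{K}\}$ and $\star\in\{1,fin\}$, that $X$ satisfies $S_{\star}(\Phi,\Psi)$ if and only if $(2^X,{\bf F^+})$ satisfies $S_{\star}(\mathcal{D}_{\Phi},\mathcal{D}_{\Psi})$, so the only work left is to specialize and to match notation.

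First I would instantiate that Theorem at $\Phi=\Psi=\mathcal{K}$ and $\star=fin$, yielding that $X$ satisfies $S_{fin}(\mathcal{K},\mathcal{K})$ if and only if $(2^X,{\bf F^+})$ satisfies $S_{fin}(\mathcal{D}_{\mathcal{K}},\mathcal{D}_{\mathcal{K}})$. Next I would reconcile the symbol $\mathcal{D}_{\mathcal{K}}$ with the $\mathcal{D}_{\Omega}$ appearing in the corollary: both denote the family of dense subsets of the operative hyperspace, here $(2^X,{\bf F^+})$. This identification is exactly Lemma \ref{l3}(1), which says that a cover $\mathcal{U}$ of $X$ is a $k$-cover precisely when $\mathcal{U}^c$ is dense in $(2^X,{\bf F^+})$; thus the class $\mathcal{D}_{\mathcal{K}}$ of complements of $k$-covers coincides with the dense subsets written $\mathcal{D}_{\Omega}$. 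Substituting this equality into the specialized equivalence gives the stated result.

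Since the Theorem carries the substantive content, I do not anticipate a genuine obstacle; the only care needed is the bookkeeping between the cover-theoretic labels ($\mathcal{K}$, $\Gamma_k$) and the hyperspace labels ($\mathcal{D}_{\Omega}$ for dense subsets, $\mathcal{D}_{\Gamma}$ for subsets converging to $\{\emptyset\}$). Should a self-contained argument be preferred, I would replay the $S_{fin}$ selection exactly as in the proof of Theorem \ref{th11}: pass from a sequence of dense sets $(D_i : i\in\mathbb{N})$ to the $k$-covers $(D_i^c : i\in\mathbb{N})$ via Lemma \ref{l3}(1), apply $S_{fin}(\mathcal{K},\mathcal{K})$ to obtain finite $A_i\subseteq D_i^c$ with $\bigcup_{i}A_i\in\mathcal{K}$, and translate $\bigcup_{i}A_i^c$ back to a dense subset of $(2^X,{\bf F^+})$ by the same lemma; the converse direction is the symmetric reversal of this chain.
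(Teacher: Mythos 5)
Your proposal is correct and matches the paper exactly: the corollary is stated there without proof as the instance $\Phi=\Psi=\mathcal{K}$, $\star=fin$ of the preceding theorem, whose own proof the paper reduces to the argument of Theorem \ref{th11} via the duality of Lemma \ref{l3}(1) between $k$-covers and dense subsets of $(2^X,{\bf F^+})$ --- precisely the specialization and notational reconciliation (and optional self-contained replay) you describe.
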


\begin{theorem} Assume that $\Phi, \Psi\in \{\Gamma_k, \mathcal{K}\}$, $\star\in \{1,fin\}$. Then for a space $X$ the following statements are equivalent:

\begin{enumerate}

\item Each open set $Y\subset X$ has the property $S_{\star}(\Phi,
\Psi)$;

\item  For each $E\in 2^X$, $(2^X,{\bf F^+})$ satisfies
$S_{\star}(\Phi_{E}, \Psi_{E})$.

\end{enumerate}

\end{theorem}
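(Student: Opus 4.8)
The plan is to follow the proof of Theorem~\ref{th12} almost verbatim, trading the ${\bf Z^+}$-topology for the ${\bf F^+}$-topology and the classes $\Gamma,\Omega$ for their compact analogues $\Gamma_k,\mathcal{K}$. Everything hinges on the complementation duality $\mathcal{A}\leftrightarrow\mathcal{A}^{c}$: for an open set $Y=E^{c}$ it converts point-level properties of $E$ in $(2^X,{\bf F^+})$ into $k$- and $\gamma_k$-covering properties of $Y$. The exact correspondences I would use are Lemma~\ref{l1}(1), which identifies membership of $E$ in the ${\bf F^+}$-closure of $\mathcal{A}$ with $\mathcal{A}^{c}$ being a $k$-cover of $E^{c}$, and Lemma~\ref{lem2}(1), which identifies ${\bf F^+}$-convergence of $\mathcal{A}$ to $E$ with $\mathcal{A}^{c}$ being a $\gamma_k$-cover of $E^{c}$. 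Thus $\mathcal{A}\in\Phi_{E}$ iff $\mathcal{A}^{c}\in\Phi$ as a cover of $E^{c}$, where $\Phi_{E}$ denotes the point-class attached to $E$ in the ${\bf F^+}$-topology ($\mathcal{K}_{E}$ being the closure/density class $\Omega_E$ and $(\Gamma_k)_{E}$ the convergence class $\Gamma_E$).

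I would prove the case $\star=1$; the case $\star=fin$ is identical after replacing selected singletons by finite subfamilies and single sets by their unions. For $(1)\Rightarrow(2)$, fix $E\in 2^X$ and a sequence $(\mathcal{A}_n:n\in\mathbb{N})$ with $\mathcal{A}_n\in\Phi_{E}$. By the dictionary above, $(\mathcal{A}_n^{c}:n\in\mathbb{N})$ is a sequence of open covers of the open set $E^{c}$ with $\mathcal{A}_n^{c}\in\Phi$ for every $n$. Since $E^{c}$ has $S_{1}(\Phi,\Psi)$ by hypothesis, there is a selection $A_n^{c}\in\mathcal{A}_n^{c}$ with $\{A_n^{c}:n\in\mathbb{N}\}\in\Psi$ as a cover of $E^{c}$; applying the same dictionary in the $\Psi$-direction gives $\{A_n:n\in\mathbb{N}\}\in\Psi_{E}$, the required selection.

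For $(2)\Rightarrow(1)$, let $Y\subset X$ be open, put $E=X\setminus Y\in 2^X$, and take a sequence $(\mathcal{F}_n:n\in\mathbb{N})$ of open covers of $Y$ with $\mathcal{F}_n\in\Phi$. Setting $\mathcal{A}_n:=\mathcal{F}_n^{c}\subseteq 2^X$ gives $\mathcal{A}_n\in\Phi_{E}$, so the hypothesis that $(2^X,{\bf F^+})$ satisfies $S_{1}(\Phi_{E},\Psi_{E})$ yields $A_n\in\mathcal{A}_n$ with $\{A_n:n\in\mathbb{N}\}\in\Psi_{E}$. Then $F_n:=A_n^{c}$ satisfies $F_n\in\mathcal{F}_n$ and $\{F_n:n\in\mathbb{N}\}\in\Psi$ as a cover of $Y$, which completes this direction.

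The selection bookkeeping is essentially a word-for-word transcription of the proof of Theorem~\ref{th12}, so the only genuine content lies in installing the correct ${\bf F^+}$-dictionary. The step I expect to demand the most care is verifying that the point-classes $\Phi_{E},\Psi_{E}$ in $(2^X,{\bf F^+})$ really are dual, under $\mathcal{A}\mapsto\mathcal{A}^{c}$, to the $k$- and $\gamma_k$-covering classes of $E^{c}$: Lemma~\ref{l1}(1) governs the $\mathcal{K}$/closure correspondence and Lemma~\ref{lem2}(1) the $\Gamma_k$/convergence correspondence. One must also check that the non-membership clause $E\notin\mathcal{A}$ built into the classes $\Omega_E,\Gamma_E$ costs nothing; this is harmless, since by the paper's convention a cover of $E^{c}$ never contains $E^{c}$, whence automatically $E^{c}\notin\mathcal{A}^{c}$, i.e.\ $E\notin\mathcal{A}$.
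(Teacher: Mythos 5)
Your proposal is correct and is essentially the paper's own argument: the paper proves this theorem by simply stating that ``the proof is similar to the proof of Theorem~\ref{th12}'', and your write-up is exactly that transcription, with Lemma~\ref{l1}(1) and Lemma~\ref{lem2}(1) supplying the ${\bf F^+}$-dictionary ($\mathcal{K}\leftrightarrow\Omega_E$, $\Gamma_k\leftrightarrow\Gamma_E$) in place of the ${\bf Z^+}$ one. Your closing remark that the nontriviality convention on covers makes the clause $E\notin\mathcal{A}$ automatic is a detail the paper leaves implicit, and it is handled correctly.
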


\begin{proof}  The proof is similar to the proof of Theorem
\ref{th12}.

\end{proof}

\begin{corollary}(Theorem 23 in \cite{cmkm}) For a space $X$ the
following are equivalent:

\begin{enumerate}

\item  For each $E\in 2^X$, $(2^X,{\bf F^+})$ satisfies
$S_{1}(\Gamma_{E}, \Omega_{E})$;

\item Each open set $Y\subset X$ has the property $S_{1}(\Gamma_k,
\mathcal{K})$.

\end{enumerate}

\end{corollary}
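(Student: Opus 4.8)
The plan is to obtain this as the special case $\star=1$, $\Phi=\Gamma_k$, $\Psi=\mathcal K$ of the preceding theorem, after reconciling the two systems of notation for the local families at a point $E\in 2^X$. The theorem records the hyperspace side through the complement families $(\Gamma_k)_E$ and $\mathcal K_E$, namely those $\mathcal A\subseteq 2^X$ whose complement $\mathcal A^c$ is, respectively, a $\gamma_k$-cover and a $k$-cover of the open set $E^c$; the corollary instead uses the intrinsic local families $\Gamma_E$ and $\Omega_E$ at the point $E$ in $(2^X,{\bf F^+})$. So the only thing to verify is that these two descriptions agree.

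First I would set up the dictionary. By Lemma \ref{lem2}(1), for a countable open cover $\mathcal U=\{U_n:n\in\mathbb N\}$ of $E^c$ the cover $\mathcal U$ is a $\gamma_k$-cover of $E^c$ if and only if $(U_n^c:n\in\mathbb N)$ converges to $E$ in $(2^X,{\bf F^+})$; equivalently $\mathcal A\in(\Gamma_k)_E$ exactly when $\mathcal A\in\Gamma_E$. Likewise, by Lemma \ref{l1}(1), $\mathcal U$ is a $k$-cover of $E^c$ if and only if $E\in Cl_{F^+}(\mathcal U^c)$, so $\mathcal A\in\mathcal K_E$ exactly when $E\in\overline{\mathcal A}$ in $(2^X,{\bf F^+})$.

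With the identifications $(\Gamma_k)_E=\Gamma_E$ and $\mathcal K_E=\Omega_E$ inserted into the preceding theorem, its condition (2) becomes precisely condition (1) of the corollary and its condition (1) becomes condition (2) of the corollary; the stated equivalence follows at once. The one point that deserves attention is the passage from $\mathcal K_E$ to $\Omega_E$, since the definition of $\Omega_E$ carries the additional clause $E\in\overline{\mathcal A}\setminus\mathcal A$ rather than merely $E\in\overline{\mathcal A}$. This is harmless: a $k$-cover of $E^c$ is, by the convention on covers adopted in the Introduction, one that does not contain the whole space $E^c$ as a member, so its complement $\mathcal A$ never contains the point $E=(E^c)^c$; hence the closure membership automatically upgrades to $E\in\overline{\mathcal A}\setminus\mathcal A$, and the same remark (together with the built-in infinitude of $\gamma_k$-covers) handles the analogous non-membership clause in $\Gamma_E$. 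No sequences are therefore lost in either direction, and the two formulations match.
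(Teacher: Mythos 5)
Your proposal is correct and follows exactly the route the paper intends: the corollary is the case $\star=1$, $\Phi=\Gamma_k$, $\Psi=\mathcal{K}$ of the preceding theorem, with the identifications $(\Gamma_k)_E=\Gamma_E$ and $\mathcal{K}_E=\Omega_E$ in $(2^X,{\bf F^+})$ supplied by Lemma \ref{lem2}(1) and Lemma \ref{l1}(1) --- precisely the duality the paper uses implicitly in the proof of Theorem \ref{th12}. Your extra care about the clause $E\in\overline{\mathcal{A}}\setminus\mathcal{A}$, handled via the paper's convention that covers are nontrivial, makes explicit a point the paper glosses over, but the argument is essentially the same.
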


\begin{corollary}(Theorem 32 in \cite{cmkm}) For a space $X$ the
following are equivalent:

\begin{enumerate}

\item  For each $E\in 2^X$, $(2^X,{\bf F^+})$ satisfies
$S_{1}(\Gamma_{E}, \Gamma_{E})$;

\item Each open set $Y\subset X$ has the property $S_{1}(\Gamma_k,
\Gamma_k)$.

\end{enumerate}

\end{corollary}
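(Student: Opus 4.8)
The plan is to read off this corollary as the special case $\star=1$, $\Phi=\Psi=\Gamma_k$ of the preceding theorem, so the whole task reduces to matching the hyperspace collection $\Gamma_E$ used here with the collection $(\Gamma_k)_E$ appearing there. First I would fix a closed set $E\in 2^X$ and put $Y=E^c$, an open subset of $X$ with $Y^c=E$. Complementation $\mathcal{A}\mapsto \mathcal{A}^c$ is a bijection between families of closed subsets of $X$ and open covers of $Y$, and by Lemma \ref{lem2}(1) it carries $\gamma_k$-covers of $Y$ exactly onto the countable subsets of $2^X$ that converge to $E$ in $(2^X,{\bf F^+})$, i.e. onto the members of $\Gamma_E$. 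This establishes the dictionary $(\Gamma_k)_E=\Gamma_E$ in the Fell hyperspace and shows that each of the two conditions of the corollary is the verbatim translation, under complementation, of the corresponding condition of the theorem.

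With this dictionary in hand I would prove the two implications directly, mirroring the proof of Theorem \ref{th12}. For $(2)\Rightarrow(1)$, fix $E$ and a sequence $(\mathcal{A}_n:n\in\mathbb{N})$ with each $\mathcal{A}_n\in\Gamma_E$; then each $\mathcal{A}_n^c$ is a $\gamma_k$-cover of $Y=E^c$, so the hypothesis $S_1(\Gamma_k,\Gamma_k)$ for $Y$ yields $U_n\in\mathcal{A}_n^c$ with $\{U_n:n\in\mathbb{N}\}$ a $\gamma_k$-cover of $Y$; setting $A_n=U_n^c\in\mathcal{A}_n$ and applying Lemma \ref{lem2}(1) once more gives $\{A_n:n\in\mathbb{N}\}\in\Gamma_E$. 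For $(1)\Rightarrow(2)$ I would reverse this: starting from a sequence $(\mathcal{U}_n:n\in\mathbb{N})$ of $\gamma_k$-covers of an open set $Y$ and putting $E=X\setminus Y$, the families $\mathcal{U}_n^c$ lie in $\Gamma_E$, so $S_1(\Gamma_E,\Gamma_E)$ produces $A_n\in\mathcal{U}_n^c$ with $\{A_n:n\in\mathbb{N}\}\in\Gamma_E$, and $U_n=A_n^c$ then furnishes the required $\gamma_k$-cover selection.

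Since the corollary is a specialization, there is no deep obstacle; the one point demanding care is the non-triviality built into both notions. A $\gamma_k$-cover is by definition infinite, and membership in $\Gamma_E$ requires a genuinely infinite sequence with $E\notin A$ that converges to $E$; so when a one-per-index selection is pushed across complementation one must be sure the resulting family is infinite and omits the limit. This needs no separate argument, because Lemma \ref{lem2}(1) is an equivalence that already encodes infiniteness and non-triviality on both sides, so an admissible selection on one side transfers to an admissible selection on the other. Thus the only real work is recording the dictionary $(\Gamma_k)_E=\Gamma_E$ via Lemma \ref{lem2}(1); with it, the corollary is exactly the instance $\Phi=\Psi=\Gamma_k$, $\star=1$ of the theorem above.
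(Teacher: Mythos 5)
Your proposal is correct and follows essentially the same route as the paper: the paper obtains this corollary as the instance $\Phi=\Psi=\Gamma_k$, $\star=1$ of the preceding theorem, whose proof is the same complementation argument (mirroring Theorem \ref{th12}) that you spell out, with Lemma \ref{lem2}(1) supplying the identification of $\gamma_k$-covers of $E^c$ with members of $\Gamma_E$ in $(2^X,{\bf F^+})$. Your explicit attention to the dictionary $(\Gamma_k)_E=\Gamma_E$ and to the infiniteness/non-triviality conventions is exactly the content the paper leaves implicit.
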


\begin{definition} A subset $A$ of a space $X$ is called an
{\it $k$-$F_{\sigma}$-set} if $A$ can be represented as
$A=\bigcup\limits_{i=1}^{\infty} F_i$ where $F_i$ is a closed set
in $X$ for each $i\in \mathbb{N}$ and for any compact set
$B\subseteq A$ there exists $i'\in \mathbb{N}$ such that
$B\subseteq F_{i'}$.
\end{definition}

\begin{definition} A space $X$ is called  {\it $k$-perfect} if every open subset is an
$k$-$F_{\sigma}$-subset of $X$.
\end{definition}

Clearly that every $k$-perfect space is perfect.

\begin{proposition}\label{pr1} Every perfectly normal space is $k$-perfect.
\end{proposition}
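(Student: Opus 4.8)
The plan is to exploit the functional characterization of perfect normality rather than the bare $F_{\sigma}$ decomposition, since the latter alone does not control compact subsets. Recall Vedenissoff's theorem: a normal space $X$ is perfectly normal if and only if every closed subset is a zero-set; equivalently, every open subset $A$ is a cozero-set, so there is a continuous function $f : X \to [0,1]$ with $A = \{x \in X : f(x) > 0\}$. I would begin by fixing an arbitrary open set $A \subseteq X$ and producing such an $f$.

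Next I would define, for each $i \in \mathbb{N}$, the set $F_i = f^{-1}([1/i, 1])$. Each $F_i$ is closed in $X$ as the preimage of a closed interval under a continuous map, and the chain is increasing, $F_i \subseteq F_{i+1}$, since $[1/i,1] \subseteq [1/(i+1),1]$. Because $A = \{x : f(x) > 0\} = \bigcup_{i} \{x : f(x) \ge 1/i\}$, we obtain $A = \bigcup_{i=1}^{\infty} F_i$, which establishes the $F_{\sigma}$ part of the required representation.

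The decisive step is to verify the compact-swallowing clause in the definition of a $k$-$F_{\sigma}$-set. Let $B \subseteq A$ be compact. Since $B \subseteq A = \{x : f(x) > 0\}$, the restriction $f|_B$ is a strictly positive continuous function on the compact set $B$, so it attains a positive minimum $m = \min_{x \in B} f(x) > 0$. Choosing $i' \in \mathbb{N}$ with $1/i' \le m$ gives $f(x) \ge m \ge 1/i'$ for every $x \in B$, hence $B \subseteq F_{i'}$. This is precisely the condition demanded, completing the proof that $A$ is a $k$-$F_{\sigma}$-set and therefore that $X$ is $k$-perfect.

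I expect the only genuine subtlety to be invoking the correct characterization of perfect normality: plain perfectness yields an $F_{\sigma}$ decomposition but offers no mechanism for trapping a compact set inside a single closed piece, which is exactly what separates $k$-perfect from perfect. Passing through a cozero representation supplies this mechanism, and the sole nontrivial topological fact used is the elementary observation that a strictly positive continuous real-valued function on a compact set is bounded away from zero.
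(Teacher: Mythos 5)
Your proof is correct, but it takes a genuinely different route from the paper's. The paper argues directly from the definition of perfect normality: it takes an increasing $F_{\sigma}$-decomposition $U=\bigcup_{i}F_i$ of an open set $U$ and uses normality to interpose open sets $U_i$ with $F_i\subseteq U_i\subseteq\overline{U_i}\subseteq U$ and $\overline{U_i}\subseteq U_{i+1}$; then $U=\bigcup_i\overline{U_i}$, and any compact $B\subseteq U$ is covered by the increasing open chain $(U_i)$, hence contained in some $U_{i'}\subseteq\overline{U_{i'}}$. You instead invoke Vedenissoff's characterization (in a perfectly normal space every open set is a cozero set) and obtain the compact-swallowing clause from the fact that a strictly positive continuous function attains a positive minimum on a compact set. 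Both arguments are sound. Yours is shorter, and its core step uses no normality at all: it really shows that \emph{every cozero set in an arbitrary space is a $k$-$F_{\sigma}$-set}, which is precisely the remark the paper records immediately after the proposition; perfect normality enters only to ensure all open sets are cozero. The trade-off is that you import Vedenissoff's theorem, whose own proof runs through Urysohn-type constructions, whereas the paper's argument is self-contained given the definition of perfect normality. (A trivial point: for $B=\emptyset$ your minimum argument is vacuous, but then $B\subseteq F_1$ anyway.)
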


\begin{proof} Let $U$ be an open subset of a perfectly normal space
$X$. Then $U$ may be represented as
$U=\bigcup\limits_{i=1}^{\infty} F_i$ where $F_i$ is closed set of
$X$ for each $i\in \mathbb{N}$ and $F_i\subset F_{i+1}$. Since $X$
is a perfectly normal space, there exists a sequence $(U_i:i\in
\mathbb{N})$ of open sets of $X$ such that $F_i\subseteq
U_i\subseteq \overline{U_i}\subseteq U$ and
$\overline{U_i}\subseteq U_{i+1}$ for each $i\in \mathbb{N}$. It
follow that $U=\bigcup\limits_{i=1}^{\infty} \overline{U_i}$ and
for any compact set $B\subseteq U$ there exists $i'\in \mathbb{N}$
such that $B\subseteq \overline{U_{i'}}$.

\end{proof}

Note that, by Proposition \ref{pr1}, each cozero set is an
$k$-$F_{\sigma}$-set.

\bigskip

{\bf Question.} Is there a $k$-perfect which is not (perfectly)
normal space?

\bigskip

Denote by $\mathbb{S}$ the Sorgenfrey line.

\begin{proposition} The space $\mathbb{S}^2$ is perfect, but not
$k$-perfect.
\end{proposition}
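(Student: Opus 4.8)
The goal is to show that the Sorgenfrey plane $\mathbb{S}^2$ is perfect but fails to be $k$-perfect, so the plan naturally splits into two independent parts.

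For the positive part, I would recall that the Sorgenfrey line $\mathbb{S}$ is perfectly normal (indeed hereditarily Lindel\"of and regular), and that perfect normality passes to finite products in the sense we need for \emph{perfectness}: although $\mathbb{S}^2$ is famously not normal, it is still perfect, since it is hereditarily Lindel\"of (being a second-order consequence of the hereditary Lindel\"of property of $\mathbb{S}$ together with regularity, every open set in a regular hereditarily Lindel\"of space is an $F_\sigma$). So the first step is: show $\mathbb{S}^2$ is hereditarily Lindel\"of and regular, hence every open subset is $F_\sigma$, giving perfectness directly from the definition. This part is routine and I would state it briefly.

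The substance is the negative part: exhibiting an open set $U\subseteq\mathbb{S}^2$ that is \emph{not} a $k$-$F_\sigma$-set. By Proposition \ref{pr1} and the surrounding discussion, a failure of $k$-perfectness must come from the interplay between compact subsets and the $F_\sigma$ decomposition. The natural candidate $U$ is the complement of the anti-diagonal $D=\{(x,-x):x\in\mathbb{R}\}$, the classical closed discrete subspace of $\mathbb{S}^2$ of cardinality $\mathfrak{c}$. The plan is: first fix any representation $U=\bigcup_{i=1}^\infty F_i$ with each $F_i$ closed in $\mathbb{S}^2$; then produce a compact set $B\subseteq U$ that is not contained in any single $F_i$, contradicting the $k$-$F_\sigma$ condition. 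The key structural fact I would exploit is that compact subsets of $\mathbb{S}^2$ are quite restricted (each compact subset of $\mathbb{S}$ is countable and nowhere dense in the usual topology, with order-type constraints), so I must build compact sets that approach the anti-diagonal $D$ in a controlled way while staying inside $U$.

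The main obstacle, and the heart of the argument, is the diagonalization that forces one $F_i$ to swallow an ``uncatchable'' compact set. The idea is a Baire-category or counting argument: since $D$ has cardinality $\mathfrak{c}$ while the open set $U$ is decomposed into countably many closed pieces $F_i$, I would examine how the $F_i$ accumulate toward points of $D$. For each point $p=(x,-x)\in D$ and each $i$, closedness of $F_i$ in $\mathbb{S}^2$ means $F_i$ cannot contain points arbitrarily close to $p$ from the ``lower-left'' Sorgenfrey direction unless $p\in F_i$, but $p\notin U\supseteq F_i$; a suitable second-category argument over the uncountable $D$ then yields an index $i_0$ and a convergent sequence of points of $F_{i_0}$ whose closure (the sequence together with its limit, taken inside $\mathbb{S}^2$) is a compact subset $B$ of $U$ not contained in any $F_i$. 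Making this convergence honest in the Sorgenfrey topology — ensuring the limit point genuinely lies in $U$ and that $B$ is compact, while no finite-index $F_i$ captures it — is the delicate step, and it is where the non-normality of $\mathbb{S}^2$ (the obstruction that separates $k$-perfect from merely perfect) is doing the real work. Once such a $B$ is produced, it directly violates the $k$-$F_\sigma$ definition for the fixed decomposition; since the decomposition was arbitrary, $U$ is not a $k$-$F_\sigma$-set and $\mathbb{S}^2$ is not $k$-perfect.
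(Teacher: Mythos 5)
Your proof of the first half is based on a false premise. You claim $\mathbb{S}^2$ is hereditarily Lindel\"of and deduce perfectness from that; but $\mathbb{S}^2$ is not even Lindel\"of, since the anti-diagonal $D=\{(x,-x):x\in\mathbb{R}\}$ is an uncountable closed discrete subspace (this is precisely the classical obstruction to normality of $\mathbb{S}^2$). Hereditary Lindel\"ofness of $\mathbb{S}$ is simply not productive. Perfectness of $\mathbb{S}^2$ is a genuinely nontrivial fact, and the paper obtains it by citing Lemma 2.3 of Heath--Michael, who prove directly that finite powers of the Sorgenfrey line are perfect despite failing to be (hereditarily) Lindel\"of or normal. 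So your conclusion is correct but your argument for it collapses at the first step.

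The second half has a more serious defect: your candidate open set $U=\mathbb{S}^2\setminus D$ \emph{is} a $k$-$F_\sigma$-set, so no argument can succeed with it. The Sorgenfrey topology refines the Euclidean topology, so every compact subset of $\mathbb{S}^2$ is Euclidean-compact; hence any compact $K\subseteq \mathbb{S}^2\setminus D$ is a Euclidean-compact set disjoint from the Euclidean-closed set $D$, and therefore is bounded and at positive Euclidean distance from $D$. Consequently the sets $F_m=\{z\in\mathbb{R}^2 : \mathrm{dist}(z,D)\ge 1/m \mbox{ and } \|z\|\le m\}$ (Euclidean distance and norm) are Euclidean-closed, hence Sorgenfrey-closed, their union is $U$, and every compact subset of $U$ lies in some $F_m$. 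This is exactly why the ``delicate step'' you flag --- producing a compact $B\subseteq U$ with its limit point honestly inside $U$ --- is not delicate but impossible: any such $B$ is again at positive distance from $D$ and is swallowed by the decomposition above. The paper's example is engineered to avoid this: it takes $U$ to be the open half-plane $\{(x,y):-x<y\}$ \emph{together with the irrational points of $D$}. Then for each irrational $y$ the set $Z_p=\{p\}\cup\{(y+\frac1n,-y+\frac1n):n\in\mathbb{N}\}$, $p=(y,-y)$, is a compact subset of $U$ that actually touches $D$; any single closed $F_k$ containing uncountably many of the $Z_p$'s is then forced, by closedness in $\mathbb{S}^2$, to contain accumulation points on the anti-diagonal, while $F_k\subseteq U$ forbids $F_k$ from meeting the rational part of $D$ --- that tension is the contradiction. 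Letting $U$ capture part of $D$ is the key idea your proposal is missing; without it, compact sets cannot ``reach'' $D$ at all, and $k$-perfectness cannot be refuted.
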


\begin{proof} By Lemma 2.3 in \cite{hm}, $\mathbb{S}^2$ is
perfect.

Consider the open set $U=\{(x,y)\in \mathbb{S}^2 : -x<y \}\bigcup
\{(x,-x): x\in \mathbb{P}\}$ in the space $\mathbb{S}^2$.

Assume that $U$ is a $k$-$F_{\sigma}$-set of $\mathbb{S}^2$. Then
$U=\bigcup\limits_{i=1}^{\infty} F_i$ where $F_i$ is closed set in
$\mathbb{S}^2$ for each $i\in \mathbb{N}$ and for any compact set
$B\subseteq U$ there exists $i'\in \mathbb{N}$ such that
$B\subseteq F_{i'}$. For each point $p=(y,-y)\in \{(x,-x): x\in
\mathbb{P}\}$ we fix the compact set $Z_p=\{p\}\bigcup
\{(y+\frac{1}{n}, -y+\frac{1}{n}) : n\in \mathbb{N}\}$. Note that
$Z_p\subset U$ for each  $p\in \{(x,-x): x\in \mathbb{P}\}$. Since
$U=\bigcup\limits_{i=1}^{\infty} F_i$ there exists $k\in
\mathbb{N}$ such that $|\{p : Z_p\subset F_k\}|>\aleph_0$. Since
the set $\{p : Z_p\subset F_k\}$ is uncountable subset of
$\{(x,-x): x\in \mathbb{P}\}$ there is a accumulation point $z$ in
subspace $\{(x,-x): x\in \mathbb{R}\}$ of the space
$\mathbb{R}^2$. Clearly that for any neighborhood $O(z)$ of $z$ in
the space $\mathbb{S}^2$  we have that $O(z)\bigcap
\bigcup\limits_{Z_p\subset F_k} Z_p\neq \emptyset$. It follows
that $O(z)\bigcap F_k\neq \emptyset$. Hence $F_k$ is not closed
set in $\mathbb{S}^2$, a contradiction.

\end{proof}

\begin{theorem}\label{th13} Assume that $\Phi, \Psi\in \{\Gamma_{k}, \mathcal{K}\}$, $\star\in \{1,fin\}$, $X$ has the property $S_{\star}(\Phi,
\Psi)$ and $A$ is an $k$-$F_{\sigma}$-set. Then $A$ has the
property $S_{\star}(\Phi, \Psi)$.
\end{theorem}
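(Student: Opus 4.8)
The plan is to exploit the defining feature of a $k$-$F_{\sigma}$-set: writing $A=\bigcup_{i}F_i$ with each $F_i$ closed in $X$, I may assume (replacing $F_i$ by $F_1\cup\dots\cup F_i$) that $F_1\subseteq F_2\subseteq\cdots$ and, discarding empty terms, that each $F_i\neq\emptyset$. The decisive property is that every compact $C\subseteq A$ is compact in $X$ and satisfies $C\subseteq F_i$ for some $i$. I will prove the statement in two representative cases according to the target class $\Psi$, the remaining instances being obtained by the same bookkeeping; I treat $\star=1$, since for $\star=fin$ one replaces single choices by finite subfamilies verbatim.

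The engine is a transfer between covers of $A$ and covers of $X$. Given a family $\mathcal{U}=\{U_\alpha\}$ of sets open in $A$, fix for each $\alpha$ an open $V_\alpha\subseteq X$ with $V_\alpha\cap A=U_\alpha$, and for $i\in\mathbb{N}$ put $\mathcal{W}_i(\mathcal{U})=\{V_\alpha\cup F_i^c:\alpha\}$, a family of open subsets of $X$. The key verifications, all resting on the decomposition $D=(D\cap F_i)\cup(D\cap F_i^c)$ for compact $D\subseteq X$ (note $D\cap F_i$ is compact and contained in $A$), are: if $\mathcal{U}$ is a $k$-cover (resp. $\gamma_k$-cover) of $A$, then $\mathcal{W}_i(\mathcal{U})$ is a $k$-cover (resp. $\gamma_k$-cover) of $X$; and conversely, if a selection $\{V_n\cup F_i^c:n\}$ is a $k$-cover (resp. $\gamma_k$-cover) of $X$, then $\{V_n\cap A:n\}$ is a $k$-cover (resp. $\gamma_k$-cover) of $F_i$, because a compact $C\subseteq F_i$ satisfies $C\cap F_i^c=\emptyset$, so $C\subseteq V_n\cup F_i^c$ forces $C\subseteq V_n\cap A$. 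This is precisely where the compact-absorption hypothesis is used.

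For $\Psi=\mathcal{K}$ I argue by partition. Split $\mathbb{N}$ into pairwise disjoint infinite sets $N_i$ ($i\in\mathbb{N}$). Given $\Phi$-covers $(\mathcal{U}_n:n\in\mathbb{N})$ of $A$, for each fixed $i$ the family $(\mathcal{W}_i(\mathcal{U}_n):n\in N_i)$ is a sequence of $\Phi$-covers of $X$; applying $S_{\star}(\Phi,\mathcal{K})$ of $X$ yields, for $n\in N_i$, sets $W_n\in\mathcal{W}_i(\mathcal{U}_n)$, $W_n=V_n\cup F_i^c$, whose collection is a $k$-cover of $X$. Setting $U_n=V_n\cap A\in\mathcal{U}_n$, the converse half of the engine makes $\{U_n:n\in N_i\}$ a $k$-cover of $F_i$. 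Doing this for every $i$ defines $U_n\in\mathcal{U}_n$ for all $n$, and the full family $\{U_n:n\in\mathbb{N}\}$ $k$-covers each $F_i$; since every compact $C\subseteq A$ lies in some $F_i$, it $k$-covers $A$.

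For $\Psi=\Gamma_k$ the partition fails---indices outside $N_i$ would spoil the $\gamma_k$ condition on $A$---so I align the exhaustion diagonally. Put $\mathcal{W}_n:=\mathcal{W}_n(\mathcal{U}_n)$ (using $F_n^c$ at stage $n$), a sequence of $\Phi$-covers of $X$, and apply $S_{\star}(\Phi,\Gamma_k)$ of $X$ to obtain $W_n=V_n\cup F_n^c$ with $\{W_n:n\}$ a $\gamma_k$-cover of $X$; set $U_n=V_n\cap A$. Given compact $C\subseteq A$, choose $i_0$ with $C\subseteq F_{i_0}$; for all $n\ge i_0$ monotonicity gives $C\subseteq F_n$, so $C\cap F_n^c=\emptyset$ and hence $C\subseteq W_n$ already forces $C\subseteq U_n$. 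Since $C\subseteq W_n$ for all but finitely many $n$, we get $C\subseteq U_n$ for all but finitely many $n$, i.e. $\{U_n:n\}$ is a $\gamma_k$-cover of $A$. The main obstacle is exactly this output step for $\Gamma_k$: one must synchronise the stage index with the exhausting sequence rather than partition, and the monotonicity $F_{i_0}\subseteq F_n$ is what neutralises the $F_n^c$-term in the tail. Two routine technicalities remain: ensuring each $\mathcal{W}_i(\mathcal{U})$ is a nontrivial cover (discard any term equal to $X$, which affects neither the $k$- nor the $\gamma_k$-property), and checking that the resulting $\gamma_k$-families are infinite (immediate from the infiniteness of the witnessing $\gamma_k$-cover of $X$, or via the convergence reformulation of Lemma \ref{lem6}).
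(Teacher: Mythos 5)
Your proof runs on the same engine as the paper's: enlarge each cover of $A$ to a cover of $X$ by adjoining $X\setminus F_i$ to (extensions of) its members, apply the selection hypothesis in $X$, and pull the selection back to $A$. Your $\Gamma_k$ case (the diagonal $\mathcal{W}_n(\mathcal{U}_n)$, with monotonicity of $(F_i)$ neutralising the $F_n^c$-term on the tail) is exactly the paper's argument, and your explicit reduction to an increasing sequence $(F_i)$ repairs a step the paper uses silently: its claim ``since $k>i'$, $B\subset U_k$'' needs $F_{i'}\subseteq F_k$. Where you genuinely diverge is the case $\Psi=\mathcal{K}$: you partition $\mathbb{N}$ into infinitely many infinite sets $N_i$ and apply the hypothesis once per $F_i$, whereas the paper keeps the single diagonal sequence $\mathcal{V}_i=\{(X\setminus F_i)\cup U : U\in\mathcal{U}_i\}$ for both targets and recovers the $k$-cover property of the trace from the fact that, in the selected $k$-cover of $X$, each compact set lies in infinitely many members, hence in one whose index exceeds $i'$. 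Your partition avoids that largeness lemma at the cost of countably many applications of the hypothesis; both routes are sound.

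The one real gap is precisely the point you dismiss as a ``routine technicality'': members of $\mathcal{W}_i(\mathcal{U})$ equal to $X$. Discarding them does not in general preserve the $k$-cover property, and the degenerate case actually occurs: if $F_i$ is compact (possible even when $A$ is not, e.g. $A=\mathbb{R}\times\{0\}\subset X=\mathbb{R}^2$ with $F_i=[-i,i]\times\{0\}$), then the $k$-cover $\mathcal{U}_i=\{(-n,n)\times\{0\} : n>i\}$ of $A$ consists entirely of sets containing $F_i$, so \emph{every} member of $\mathcal{W}_i(\mathcal{U}_i)$ equals $X$, and after discarding nothing is left; since covers are nontrivial by the paper's convention, the hypothesis on $X$ cannot be applied to such a family. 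The repair needs a small extra idea, a dichotomy per index: either some $U\in\mathcal{U}_n$ contains $F_i$ --- then select such a $U$ outright, and monotonicity ($B\subseteq F_{i_0}\subseteq F_i\subseteq U$ for $i\ge i_0$) shows these direct selections achieve the required covering on the tail --- or no member of $\mathcal{U}_n$ contains $F_i$, in which case $\mathcal{W}_i(\mathcal{U}_n)$ contains no copy of $X$ and your argument applies verbatim; the two kinds of selections merge compatibly with both your partition and the diagonal. To be fair, the paper's own proof ignores this issue entirely (it asserts $\mathcal{V}_i\in\Phi$ without addressing nontriviality), so your write-up, which at least notices it, is no worse; but the patch as you state it is incorrect, and with the dichotomy in place your proof would be complete.
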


\begin{proof} We prove the theorem for $\star=fin$, the other
proofs being similar.

Assume that $X$ has the property $S_{fin}(\Phi, \Psi)$ and $A$ is
an $k$-$F_{\sigma}$-set. Consider a sequence $(\mathcal{U}_i :
i\in \mathbb{N})$ of covers $A$ such that $\mathcal{U}_i\in
\Phi_A$ (where $\Phi_A$ is the  $\Phi$ family of covers of $A$)
for each $i\in \mathbb{N}$. Let $A=\bigcup\limits_{i=1}^{\infty}
F_i$ where $F_i$ is a closed set in $X$ for each $i\in \mathbb{N}$
and for any compact set $B\subseteq A$ there exists $i'\in
\mathbb{N}$ such that $B\subseteq F_{i'}$. Consider
$\mathcal{V}_i=\{(X\setminus F_i)\bigcup U : U\in \mathcal{U}_i
\}$ for each $i\in \mathbb{N}$.

We claim that $\mathcal{V}_i\in \Phi$ for each $i\in \mathbb{N}$.
Let $S$ be a compact subset of $X$. Then $S\bigcap F_i$ is a
compact subset of $A$ and hence there is $U\in \mathcal{U}_i$ such
that $S\bigcap F_i\subset U$. It follows that $S\subset
(X\setminus F_i)\bigcup U$ for $(X\setminus F_i)\bigcup U\in
\mathcal{V}_i$.

Since $X$ has the property $S_{fin}(\Phi, \Psi)$, there is a
sequence $(B_{i}: i\in \mathbb{N})$ of finite sets such that for
each $i$, $B_{i}\subset \mathcal{V}_i$, and
$\bigcup_{i\in\mathbb{N}}B_{i}\in \Psi$.

We claim that $\bigcup_{i\in\mathbb{N}}\{B_{i}\bigcap F_i :
B_{i}\bigcap F_i\subset \mathcal{U}_i$, $i\in \mathbb{N}\}\in
\Psi_A$. Let $B$ be a compact subset of $A$ then there exists
$i'\in \mathbb{N}$ such that $B\subseteq F_{i'}$. Since
$\bigcup_{i\in\mathbb{N}}B_{i}$ is a large cover of $X$ there is
$k\in \mathbb{N}$  and $V_k\in B_{k}\subset \mathcal{V}_k$ such
that $k>i'$ and $B\subset V_k$. But $V_k=(X\setminus F_k)\bigcup
U_k$ for $U_k\in \mathcal{U}_k$. Since $k>i'$, $B\subset U_k$. It
follows that $A$ has the property $S_{fin}(\Phi, \Psi)$.

\end{proof}

\begin{proposition} Assume that $\Phi, \Psi\in \{\Gamma_k, \mathcal{K}\}$, $\star\in \{1,fin\}$. Then for a $k$-perfect 
space $X$ the following statements are equivalent:

\begin{enumerate}

\item $X$ satisfies $S_{\star}(\Phi, \Psi)$;

\item Each open set $Y\subset X$ has the property $S_{\star}(\Phi,
\Psi)$.

\end{enumerate}

\end{proposition}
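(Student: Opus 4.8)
The plan is to reduce this entirely to Theorem~\ref{th13}, which has already carried out all the combinatorial work of transferring a selection principle from a space to a $k$-$F_\sigma$-subset. Since the statement is an equivalence, I would treat the two implications separately, noting in advance that one is immediate and the other is a one-line application of a previous theorem. First I would dispose of $(2)\Rightarrow(1)$: the space $X$ is itself an open subset of $X$, so applying the hypothesis that \emph{every} open set has the property $S_{\star}(\Phi,\Psi)$ to the particular choice $Y=X$ yields at once that $X$ has $S_{\star}(\Phi,\Psi)$. No covering argument is needed here.

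For $(1)\Rightarrow(2)$ I would fix an arbitrary open set $Y\subseteq X$ and invoke the definition of $k$-perfectness: because $X$ is $k$-perfect, the open set $Y$ is a $k$-$F_{\sigma}$-subset of $X$. Then I would apply Theorem~\ref{th13} with $A=Y$. Under the standing assumptions $\Phi,\Psi\in\{\Gamma_k,\mathcal{K}\}$ and $\star\in\{1,fin\}$, that theorem asserts precisely that if $X$ enjoys $S_{\star}(\Phi,\Psi)$ and $A$ is a $k$-$F_{\sigma}$-set, then $A$ inherits $S_{\star}(\Phi,\Psi)$. Taking $A=Y$ gives exactly that $Y$ has the property $S_{\star}(\Phi,\Psi)$, and since $Y$ was an arbitrary open subset, this establishes $(2)$.

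Because both directions collapse to facts already in hand---the trivial observation that $X$ is open in itself, together with Theorem~\ref{th13}---I do not anticipate any genuine obstacle. The essential difficulty was resolved earlier in the proof of Theorem~\ref{th13}, where one passes from covers of the $k$-$F_{\sigma}$-set $A$ to covers of the whole space $X$ through the enlarged sets $(X\setminus F_i)\cup U$ and then restricts the selected finite subfamilies back to $A$, using that every compact $B\subseteq A$ lies in some single $F_{i'}$. The only point I would verify explicitly is purely bookkeeping: that the admissible classes $\{\Gamma_k,\mathcal{K}\}$ and the index set $\star\in\{1,fin\}$ in the present statement coincide verbatim with the hypotheses of Theorem~\ref{th13}, which they do, so the invocation is legitimate for every one of the cases covered by $\Phi$, $\Psi$, and $\star$.
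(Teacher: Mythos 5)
Your proof is correct and is exactly the argument the paper intends: the paper states this proposition without proof immediately after Theorem~\ref{th13}, since $(1)\Rightarrow(2)$ is that theorem applied to an open $Y$, which is a $k$-$F_{\sigma}$-set by $k$-perfectness, and $(2)\Rightarrow(1)$ is trivial because $X$ is open in itself. No issues.
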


By Proposition  \ref{pr1}, we have the following result.

\begin{proposition} Assume that $\Phi, \Psi\in \{\Gamma_k, \mathcal{K}\}$, $\star\in \{1,fin\}$. Then for a perfectly normal
space $X$ the following statements are equivalent:

\begin{enumerate}

\item $X$ satisfies $S_{\star}(\Phi, \Psi)$;

\item Each open set $Y\subset X$ has the property $S_{\star}(\Phi,
\Psi)$.

\end{enumerate}

\end{proposition}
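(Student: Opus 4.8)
The plan is to obtain this as an immediate consequence of the two preceding results rather than to argue from scratch. By Proposition \ref{pr1} every perfectly normal space $X$ is $k$-perfect, and the immediately preceding proposition establishes precisely the equivalence $(1)\Leftrightarrow(2)$ for every $k$-perfect space. So once Proposition \ref{pr1} is invoked to place $X$ in the class of $k$-perfect spaces, nothing remains to be done. I would state the proof in essentially one line: by Proposition \ref{pr1} the space $X$ is $k$-perfect, hence the assertion follows from the previous proposition.

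If instead one prefers a self-contained argument that does not route through the notion of $k$-perfectness, I would unfold it as follows. The direction $(2)\Rightarrow(1)$ is trivial: $X$ is itself an open subset of $X$, so applying the hypothesis to $Y=X$ yields $S_{\star}(\Phi,\Psi)$ for $X$. For $(1)\Rightarrow(2)$, let $Y\subset X$ be open. Since $X$ is perfectly normal, $Y$ is a cozero set, and by the argument in the proof of Proposition \ref{pr1} (writing $Y=\bigcup_i \overline{U_i}$ with $F_i\subseteq U_i\subseteq\overline{U_i}\subseteq Y$ and $\overline{U_i}\subseteq U_{i+1}$, so that each compact $B\subseteq Y$ sits inside some $\overline{U_{i'}}$) the set $Y$ is a $k$-$F_{\sigma}$-set of $X$. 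Then Theorem \ref{th13}, applied with $A=Y$, gives that $Y$ inherits the property $S_{\star}(\Phi,\Psi)$ from $X$.

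There is essentially no obstacle here, since all the genuine work has already been carried out: the transfer of $S_{\star}(\Phi,\Psi)$ to $k$-$F_{\sigma}$-subsets is Theorem \ref{th13}, and the fact that open subsets of a perfectly normal space are $k$-$F_{\sigma}$ is Proposition \ref{pr1}. The only minor points I would check are that $\Phi,\Psi$ range over $\{\Gamma_k,\mathcal{K}\}$ (the $k$-cover families appropriate to the ${\bf F^+}$-topology, so that Theorem \ref{th13} applies verbatim) and the convention that $Y\subset X$ is read as $Y\subseteq X$, which is what makes the case $Y=X$ admissible in the $(2)\Rightarrow(1)$ direction.
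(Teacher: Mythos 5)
Your proposal is correct and follows exactly the paper's own route: the paper derives this proposition with the single remark ``By Proposition \ref{pr1}, we have the following result,'' i.e., perfect normality gives $k$-perfectness, and the preceding proposition for $k$-perfect spaces (itself resting on Theorem \ref{th13}) yields the equivalence. Your optional self-contained unfolding via Theorem \ref{th13} is just the same argument with the intermediate proposition inlined, so there is nothing to add.
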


\begin{theorem} Assume that $\Phi, \Psi\in \{\Gamma_k, \mathcal{K}\}$, $\star\in \{1,fin\}$.  Then for a
$k$-perfect space $X$ the following statements are equivalent:

\begin{enumerate}

\item $X$ satisfies $S_{\star}(\Phi, \Psi)$;

\item $(2^X,{\bf F^+})$ satisfies $S_{\star}(\mathcal{D}_{\Phi},
\mathcal{D}_{\Psi})$;

\item For each $E\in 2^X$, $(2^X,{\bf F^+})$ satisfies
$S_{\star}(\Phi_{E}, \Psi_{E})$.

\end{enumerate}

\end{theorem}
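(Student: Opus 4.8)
The plan is to prove the two equivalences $(1)\Leftrightarrow(2)$ and $(1)\Leftrightarrow(3)$ by assembling results already established in this section, with $k$-perfectness entering only as the bridge between a selection property on $X$ and the same property on every open subset of $X$.

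First I would settle $(1)\Leftrightarrow(2)$. This is exactly the ${\bf F^+}$-analogue of Theorem \ref{th11} proved earlier in this section, and it holds for an arbitrary space $X$; the hypothesis that $X$ be $k$-perfect plays no role here, so nothing beyond citing that theorem is required.

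For $(1)\Leftrightarrow(3)$ I would route through the intermediate property that \emph{each open set $Y\subset X$ has $S_{\star}(\Phi,\Psi)$}. The equivalence of $(1)$ with this intermediate property is precisely the preceding proposition for $k$-perfect spaces, and this is the \emph{only} step where the $k$-perfect hypothesis is used. The equivalence of the intermediate property with $(3)$ is the ${\bf F^+}$-analogue of Theorem \ref{th12} (the theorem matching the open-subset selection property against the local property $S_{\star}(\Phi_{E},\Psi_{E})$ at every $E\in 2^X$), which again holds for any space. Chaining these gives $(1)\Leftrightarrow(3)$, and together with $(1)\Leftrightarrow(2)$ all three statements are equivalent.

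The hard part is not in this assembly but is already absorbed into the cited results, chiefly the $k$-perfect proposition. Its substance rests on Theorem \ref{th13}: in a $k$-perfect space every open subset is a $k$-$F_{\sigma}$-set, and Theorem \ref{th13} guarantees that $S_{\star}(\Phi,\Psi)$ passes from $X$ to each $k$-$F_{\sigma}$-subset; the reverse implication is trivial since $X$ is itself open. Thus, once Theorem \ref{th13} and the two ${\bf F^+}$-duality theorems are available, the present theorem reduces to a three-way chaining whose only delicate point is verifying that $\Phi$, $\Psi$ and $\star$ range over the matching values in each cited statement, which they do by hypothesis.
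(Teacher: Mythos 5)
Your proposal is correct and matches the paper's intended argument: the paper states this theorem without an explicit proof precisely because it follows by assembling the preceding results, namely the ${\bf F^+}$-analogues of Theorems \ref{th11} and \ref{th12} (which hold for arbitrary spaces) together with the proposition for $k$-perfect spaces, whose substance is Theorem \ref{th13} applied to open sets viewed as $k$-$F_{\sigma}$-sets. Your chaining $(1)\Leftrightarrow(2)$ directly and $(1)\Leftrightarrow(3)$ through the intermediate open-subset property, with $k$-perfectness used only in that bridge, is exactly the intended decomposition.
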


By Proposition  \ref{pr1}, we have the following result.

\begin{corollary} Assume that $\Phi, \Psi\in \{\Gamma_k, \mathcal{K}\}$, $\star\in \{1,fin\}$.  Then for a perfectly normal
space $X$ the following statements are equivalent:

\begin{enumerate}

\item $X$ satisfies $S_{\star}(\Phi, \Psi)$;

\item $(2^X,{\bf F^+})$ satisfies $S_{\star}(\mathcal{D}_{\Phi},
\mathcal{D}_{\Psi})$;

\item  For each $E\in 2^X$, $(2^X,{\bf F^+})$ satisfies
$S_{\star}(\Phi_{E}, \Psi_{E})$.

\end{enumerate}

\end{corollary}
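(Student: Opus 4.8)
The plan is to obtain this corollary as an immediate specialization of the theorem that directly precedes it, namely the equivalence of (1), (2), and (3) established for every $k$-perfect space. All of the genuine content has already been done there; what remains is only to verify that the hypothesis of perfect normality is subsumed by $k$-perfectness. Concretely, the same three statements, with the same parameter constraints $\Phi, \Psi \in \{\Gamma_k, \mathcal{K}\}$ and $\star \in \{1, fin\}$, appear verbatim in both the theorem and this corollary, so no translation between formulations is needed.

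First I would invoke Proposition \ref{pr1}, which asserts that every perfectly normal space is $k$-perfect. Given a perfectly normal space $X$, this guarantees that $X$ is $k$-perfect, and hence that $X$ satisfies the hypothesis of the preceding theorem.

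Second, I would apply that theorem to $X$: since $X$ is $k$-perfect, the theorem gives the equivalence of statements (1), (2), and (3). As these coincide exactly with the statements of the corollary, the equivalence is established and the proof concludes. Thus the argument reduces to a single citation of Proposition \ref{pr1} followed by an appeal to the $k$-perfect case.

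I do not anticipate any real obstacle, precisely because the proof is a pure specialization: the heavy lifting (the $k$-$F_\sigma$ decomposition argument, via Theorem \ref{th13} and the bitopological correspondences of Lemmas \ref{l1}--\ref{lem6}) is already embedded in the earlier results. The only point worth confirming is the compatibility of the index ranges for $\Phi, \Psi$ and $\star$ between the two statements, which is immediate by inspection.
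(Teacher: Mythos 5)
Your proposal is correct and matches the paper's own derivation exactly: the paper obtains this corollary by the remark ``By Proposition \ref{pr1}, we have the following result,'' i.e., perfect normality gives $k$-perfectness and the preceding theorem for $k$-perfect spaces then yields the three-way equivalence verbatim. Nothing further is needed.
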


\section{Bitopological space $(2^X,{\bf F^+}, {\bf Z^+})$}

\begin{theorem} Assume that $\Phi\in \{\Gamma_k, \mathcal{K}\}$, $\Psi\in \{\Gamma, \Omega\}$, $\star\in \{1,fin\}$.  Then for a
space $X$ the following statements are equivalent:

\begin{enumerate}

\item $X$ satisfies $S_{\star}(\Phi, \Psi)$;

\item $(2^X,{\bf F^+}, {\bf Z^+})$ satisfies
$S_{\star}(\mathcal{D}^{\bf F^+}_{\Phi}, \mathcal{D}^{\bf
Z^+}_{\Psi})$.

\end{enumerate}

\end{theorem}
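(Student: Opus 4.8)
The plan is to follow the proof of Theorem~\ref{th11} almost verbatim, the single new feature being that the hypothesis family is now measured in the ${\bf F^+}$-topology and the conclusion family in the ${\bf Z^+}$-topology. All four translation lemmas needed are already available: for the source side, Lemma~\ref{l3}(1) identifies $k$-covers of $X$ with dense subsets of $(2^X,{\bf F^+})$ and Lemma~\ref{lem6} identifies $\gamma_k$-covers with subsets of $(2^X,{\bf F^+})$ converging to $\{\emptyset\}$; for the target side, Lemma~\ref{l3}(2) identifies $\omega$-covers with dense subsets of $(2^X,{\bf Z^+})$ and Lemma~\ref{lem5} identifies $\gamma$-covers with subsets of $(2^X,{\bf Z^+})$ converging to $\{\emptyset\}$. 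Thus, for every admissible choice of $\Phi\in\{\Gamma_k,\mathcal{K}\}$ and $\Psi\in\{\Gamma,\Omega\}$, the complementation map $\mathcal{A}\mapsto\mathcal{A}^c$ carries the $\Phi$-covers of $X$ bijectively onto $\mathcal{D}^{\bf F^+}_{\Phi}$ and the $\Psi$-covers onto $\mathcal{D}^{\bf Z^+}_{\Psi}$. I would write out the case $\star=fin$ and note that $\star=1$ is the same argument with ``finite subset'' replaced by ``single element''.

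For $(1)\Rightarrow(2)$, I would start from a sequence $(D_i:i\in\mathbb{N})$ with $D_i\in\mathcal{D}^{\bf F^+}_{\Phi}$. The appropriate ${\bf F^+}$-lemma makes each $D^c_i$ a $\Phi$-cover of $X$, so $S_{fin}(\Phi,\Psi)$ supplies finite sets $A_i\subseteq D^c_i$ with $\bigcup_{i}A_i\in\Psi$; taking complements and invoking the appropriate ${\bf Z^+}$-lemma yields $\bigcup_i A^c_i\in\mathcal{D}^{\bf Z^+}_{\Psi}$, and since $A^c_i\subseteq D_i$ this is the selection required. For $(2)\Rightarrow(1)$, I would reverse the procedure: given $\Phi$-covers $\mathcal{U}_n$ of $X$, set $\mathcal{A}_n=\mathcal{U}^c_n\in\mathcal{D}^{\bf F^+}_{\Phi}$, apply the bitopological $S_{fin}(\mathcal{D}^{\bf F^+}_{\Phi},\mathcal{D}^{\bf Z^+}_{\Psi})$, and take complements of the resulting finite subsets to recover a member of $\Psi$.

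The step to watch --- and it is bookkeeping rather than a genuine difficulty --- is confirming that the mixed-topology selection hypothesis behaves exactly like the single-topology one. The reason is that $S_{\star}(\mathcal{A},\mathcal{B})$ is a purely combinatorial assertion about families of subsets of the common set $2^X$; the two topologies intervene only in the meanings of $\mathcal{D}^{\bf F^+}_{\Phi}$ and $\mathcal{D}^{\bf Z^+}_{\Psi}$. Because complementation simultaneously intertwines both of these meanings with the corresponding cover conditions on $X$, the equivalence transfers with no further interaction between ${\bf F^+}$ and ${\bf Z^+}$. I therefore expect no essential obstacle; the entire content lies in pairing each of the two source classes $\Gamma_k,\mathcal{K}$ with its ${\bf F^+}$-lemma and each of the two target classes $\Gamma,\Omega$ with its ${\bf Z^+}$-lemma.
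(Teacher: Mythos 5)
Your proposal is correct and matches the paper's intent exactly: the paper's own proof of this theorem is just the remark that it is ``similar to the proof of Theorem~\ref{th11}'', and your write-up carries out precisely that adaptation, pairing Lemma~\ref{l3}(1)/Lemma~\ref{lem6} with the ${\bf F^+}$ side and Lemma~\ref{l3}(2)/Lemma~\ref{lem5} with the ${\bf Z^+}$ side under the complementation map. Your observation that the selection hypothesis is purely combinatorial, with the two topologies entering only through the meanings of $\mathcal{D}^{\bf F^+}_{\Phi}$ and $\mathcal{D}^{\bf Z^+}_{\Psi}$, is exactly the point that makes the paper's one-line proof legitimate.
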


\begin{proof} The proof is similar to the proof of Theorem
\ref{th11}.

\end{proof}

\begin{corollary}(Theorem 6 in \cite{mkm1}) For a space $X$ the following are equivalent:

\begin{enumerate}

\item $X$ satisfies $S_{1}(\mathcal{K}, \Omega)$;

\item $(2^X,{\bf F^+}, {\bf Z^+})$ satisfies
$S_{1}(\mathcal{D}^{\bf F^+}_{\Omega}, \mathcal{D}^{\bf
Z^+}_{\Omega})$.

\end{enumerate}

\end{corollary}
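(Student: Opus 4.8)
The plan is to read this corollary off the preceding Theorem as its special case $\Phi=\mathcal{K}$, $\Psi=\Omega$, $\star=1$. The one point that must be checked is the meaning of the $\mathcal{D}$-subscripts. Throughout, $\mathcal{D}_{\Omega}$ names the family of \emph{dense} subsets of the hyperspace under consideration and $\mathcal{D}_{\Gamma}$ the family of subsets \emph{converging to} $\{\emptyset\}$. By Lemma~\ref{l3}(1) the dense subsets of $(2^X,{\bf F^+})$ are exactly the complements of $k$-covers, so $\mathcal{D}^{\bf F^+}_{\mathcal{K}}$ and $\mathcal{D}^{\bf F^+}_{\Omega}$ denote one and the same family; and by Lemma~\ref{l3}(2), $\mathcal{D}^{\bf Z^+}_{\Omega}$ is the family of dense subsets of $(2^X,{\bf Z^+})$, which correspond to the $\omega$-covers of $X$. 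With these identifications, clauses (1) and (2) of the corollary are verbatim the two clauses of the Theorem for the chosen parameters.

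For completeness I would also record the direct argument, which is the proof of Theorem~\ref{th11} rerun with Lemma~\ref{l3} supplying the translation on \emph{both} sides. For $(1)\Rightarrow(2)$, take a sequence $(D_i:i\in\mathbb{N})$ of dense subsets of $(2^X,{\bf F^+})$; each $D_i^c$ is a $k$-cover of $X$ by Lemma~\ref{l3}(1), so $S_1(\mathcal{K},\Omega)$ selects $U_i\in D_i^c$ with $\{U_i:i\in\mathbb{N}\}\in\Omega$. Putting $b_i=U_i^c\in D_i$ and applying Lemma~\ref{l3}(2) to $\{U_i:i\in\mathbb{N}\}$ shows $\{b_i:i\in\mathbb{N}\}$ is dense in $(2^X,{\bf Z^+})$, i.e.\ belongs to $\mathcal{D}^{\bf Z^+}_{\Omega}$. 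For $(2)\Rightarrow(1)$, start from $k$-covers $\mathcal{U}_n$, pass to the dense-in-${\bf F^+}$ sets $\mathcal{U}_n^c$ by Lemma~\ref{l3}(1), apply the bitopological principle $S_1(\mathcal{D}^{\bf F^+}_{\Omega},\mathcal{D}^{\bf Z^+}_{\Omega})$ to get $A_n\in\mathcal{U}_n^c$ with $\{A_n:n\in\mathbb{N}\}$ dense in $(2^X,{\bf Z^+})$, and read Lemma~\ref{l3}(2) backwards to conclude that $\{A_n^c:n\in\mathbb{N}\}$ is the desired $\omega$-cover selected one set per $\mathcal{U}_n$.

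The essential --- and only --- point to keep straight is that the \emph{source} family is taken in the ${\bf F^+}$-topology while the \emph{target} family is taken in the ${\bf Z^+}$-topology, so that it is genuinely the bitopological object $(2^X,{\bf F^+},{\bf Z^+})$ that carries the selection principle; this is precisely what lets a $k$-cover hypothesis on the domain produce an $\omega$-cover in the range. Because Lemma~\ref{l3} gives the two complementary density characterizations unconditionally, no compactness or perfectness assumption on $X$ enters, matching the Theorem. I anticipate no real obstacle here: the sole risk is clerical, namely keeping the convention ``$\Omega$ for dense, $\Gamma$ for convergent'' attached to the correct topology as one moves between $\mathcal{D}^{\bf F^+}_{\mathcal{K}}=\mathcal{D}^{\bf F^+}_{\Omega}$ and $\mathcal{D}^{\bf Z^+}_{\Omega}$.
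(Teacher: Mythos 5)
Your proposal is correct and takes essentially the same route as the paper: the corollary is the instance $\Phi=\mathcal{K}$, $\Psi=\Omega$, $\star=1$ of the preceding bitopological theorem, whose proof the paper itself describes as the argument of Theorem~\ref{th11} rerun (with Lemma~\ref{l3} translating dense sets in $(2^X,{\bf F^+})$ to $k$-covers and $\omega$-covers to dense sets in $(2^X,{\bf Z^+})$), which is exactly your direct argument. Your identification $\mathcal{D}^{\bf F^+}_{\mathcal{K}}=\mathcal{D}^{\bf F^+}_{\Omega}$ as the family of dense subsets of $(2^X,{\bf F^+})$ is precisely the notational bookkeeping needed to match the corollary's wording to the theorem's.
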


\begin{corollary}(Theorem 14 in \cite{mkm1}) For a space $X$ the following are equivalent:

\begin{enumerate}

\item $X$ satisfies $S_{fin}(\mathcal{K}, \Omega)$;

\item $(2^X,{\bf F^+}, {\bf Z^+})$ satisfies
$S_{fin}(\mathcal{D}^{\bf F^+}_{\Omega}, \mathcal{D}^{\bf
Z^+}_{\Omega})$.

\end{enumerate}

\end{corollary}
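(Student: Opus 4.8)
The plan is to obtain this equivalence as the special case $\Phi=\mathcal{K}$, $\Psi=\Omega$, $\star=fin$ of the bitopological theorem immediately preceding it; since that theorem was only indicated to follow the pattern of Theorem \ref{th11}, I would spell out the translation between the two sides explicitly. The entire argument rests on the dictionary supplied by Lemma \ref{l3}: a subset $D\subseteq 2^X$ is dense in $(2^X,{\bf F^+})$ exactly when the complement family $D^c$ is a $k$-cover of $X$, and $D$ is dense in $(2^X,{\bf Z^+})$ exactly when $D^c$ is an $\omega$-cover of $X$. Thus $\mathcal{D}^{\bf F^+}_{\Omega}$ (the dense subsets in the Fell topology) corresponds under the involution $\mathcal{A}\mapsto\mathcal{A}^c$ to the class $\mathcal{K}$, and $\mathcal{D}^{\bf Z^+}_{\Omega}$ corresponds to $\Omega$. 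With this correspondence in hand the equivalence becomes a matter of pushing a single selection back and forth through complementation.

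For $(1)\Rightarrow(2)$ I would start from a sequence $(D_i:i\in\mathbb{N})$ of dense subsets of $(2^X,{\bf F^+})$. By Lemma \ref{l3}(1) each $D_i^c$ is a $k$-cover of $X$, so $(D_i^c:i\in\mathbb{N})$ is an admissible input for $S_{fin}(\mathcal{K},\Omega)$. Feeding it to the hypothesis yields finite sets $A_i\subseteq D_i^c$ with $\bigcup_i A_i\in\Omega$. Taking complements term by term gives finite sets $A_i^c\subseteq D_i$, and by Lemma \ref{l3}(2) the union $\bigcup_i A_i^c=(\bigcup_i A_i)^c$ is dense in $(2^X,{\bf Z^+})$, i.e. lies in $\mathcal{D}^{\bf Z^+}_{\Omega}$. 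This is precisely a witness for $S_{fin}(\mathcal{D}^{\bf F^+}_{\Omega},\mathcal{D}^{\bf Z^+}_{\Omega})$.

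The reverse direction $(2)\Rightarrow(1)$ runs the same loop backwards: given $k$-covers $(\mathcal{U}_n:n\in\mathbb{N})$, set $\mathcal{A}_n:=\mathcal{U}_n^c$, which is dense in $(2^X,{\bf F^+})$ by Lemma \ref{l3}(1); apply $S_{fin}(\mathcal{D}^{\bf F^+}_{\Omega},\mathcal{D}^{\bf Z^+}_{\Omega})$ to get finite $A_n\subseteq\mathcal{A}_n$ with $\bigcup_n A_n$ dense in $(2^X,{\bf Z^+})$; then the finite families $A_n^c\subseteq\mathcal{U}_n$ have union an $\omega$-cover of $X$ by Lemma \ref{l3}(2), establishing $S_{fin}(\mathcal{K},\Omega)$.

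The only point genuinely requiring care — and the one I would watch most closely — is the bitopological bookkeeping: the input families are tested for density in the \emph{Fell} topology while the output must be dense in the \emph{$Z^+$} topology, so one must invoke the two \emph{different} parts of Lemma \ref{l3} on the two sides and verify that the complementation involution carries finite subsets to finite subsets and unions to unions. This is routine, but it is exactly the place where the bitopological structure $(2^X,{\bf F^+},{\bf Z^+})$ enters and distinguishes this corollary from the single-topology results of Theorem \ref{th11}. No compactness or separation assumption beyond the standing ones is needed, so the argument applies to every space $X$ in the sense of this paper.
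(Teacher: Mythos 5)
Your proposal is correct and follows exactly the paper's route: the paper derives this corollary as the case $\Phi=\mathcal{K}$, $\Psi=\Omega$, $\star=fin$ of the preceding bitopological theorem, whose proof is the same complementation argument as Theorem~\ref{th11}, using Lemma~\ref{l3}(1) on the ${\bf F^+}$ side and Lemma~\ref{l3}(2) on the ${\bf Z^+}$ side. You have merely written out the details the paper leaves implicit under ``similar to the proof of Theorem~\ref{th11}.''
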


\begin{theorem} Assume that $\Phi\in \{\Gamma_k, \mathcal{K}\}$, $\Psi\in \{\Gamma, \Omega\}$, $\star\in \{1,fin\}$.  Then for a space $X$ the following statements are equivalent:

\begin{enumerate}

\item Each open set $Y\subset X$ has the property $S_{\star}(\Phi,
\Psi)$;

\item  For each $E\in 2^X$, $(2^X,{\bf F^+}, {\bf Z^+})$ satisfies
$S_{\star}(\Phi^{\bf F^+}_{E}, \Psi^{\bf Z^+}_{E})$.

\end{enumerate}

\end{theorem}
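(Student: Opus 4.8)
The plan is to run the same complement-duality argument as in the proof of Theorem~\ref{th12}, but now keeping the two topologies separate: on the input side I use the ${\bf F^+}$-characterizations of $\Phi$-covers (Lemma~\ref{l1}(1) and Lemma~\ref{lem2}(1)), and on the output side the ${\bf Z^+}$-characterizations of $\Psi$-covers (Lemma~\ref{l1}(2) and Lemma~\ref{lem2}(2)). The engine throughout is the complementation bijection $\mathcal{A}\mapsto\mathcal{A}^c$ between subsets of $2^X$ and families of open subsets of $X$, together with the resulting dictionary: a family $\mathcal{A}\subset 2^X$ lies in $\Phi^{\bf F^+}_E$ precisely when $\mathcal{A}^c$ is a $\Phi$-cover of the open set $E^c$ (a $k$-cover if $\Phi=\mathcal{K}$, a $\gamma_k$-cover if $\Phi=\Gamma_k$), and symmetrically $\mathcal{B}\subset 2^X$ lies in $\Psi^{\bf Z^+}_E$ precisely when $\mathcal{B}^c$ is a $\Psi$-cover of $E^c$ (an $\omega$-cover if $\Psi=\Omega$, a $\gamma$-cover if $\Psi=\Gamma$).

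For $(1)\Rightarrow(2)$, fix $E\in 2^X$ and a sequence $(\mathcal{A}_n:n\in\mathbb{N})$ with $\mathcal{A}_n\in\Phi^{\bf F^+}_E$. By the ${\bf F^+}$-lemmas each $\mathcal{A}^c_n$ is a $\Phi$-cover of the open set $E^c$; since $E^c\subset X$ is open, hypothesis (1) supplies the property $S_{\star}(\Phi,\Psi)$ for $E^c$, so there is a selection from $(\mathcal{A}^c_n:n\in\mathbb{N})$ --- single elements when $\star=1$, finite subsets when $\star=fin$ --- whose union is a $\Psi$-cover of $E^c$. Complementing this union and invoking Lemma~\ref{l1}(2) (if $\Psi=\Omega$) or Lemma~\ref{lem2}(2) (if $\Psi=\Gamma$) returns the corresponding selection from $(\mathcal{A}_n:n\in\mathbb{N})$ whose gathered image lies in $\Psi^{\bf Z^+}_E$, which is exactly what $S_{\star}(\Phi^{\bf F^+}_E,\Psi^{\bf Z^+}_E)$ requires.

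For $(2)\Rightarrow(1)$, let $Y\subset X$ be open and let $(\mathcal{F}_n:n\in\mathbb{N})$ be a sequence of $\Phi$-covers of $Y$. Put $E=X\setminus Y\in 2^X$ and $\mathcal{A}_n=\mathcal{F}^c_n$. The ${\bf F^+}$-lemmas give $\mathcal{A}_n\in\Phi^{\bf F^+}_E$; here one checks that, because each cover is nontrivial, $Y\notin\mathcal{F}_n$, hence $E\notin\mathcal{A}_n$, so $E$ is a genuine accumulation point and not a member, as the definitions of $\Omega_E$ and $\Gamma_E$ demand. Applying (2) to $(\mathcal{A}_n:n\in\mathbb{N})$ and complementing the resulting member of $\Psi^{\bf Z^+}_E$ back through the ${\bf Z^+}$-lemmas produces a selection from $(\mathcal{F}_n)$ whose union is a $\Psi$-cover of $Y$; that is, $Y$ satisfies $S_{\star}(\Phi,\Psi)$.

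I expect the only genuine care to be the bitopological bookkeeping: one must consistently read $\Phi$-side accumulation in ${\bf F^+}$ and $\Psi$-side accumulation in ${\bf Z^+}$, and verify the side conditions --- the non-membership $E\notin\mathcal{A}_n$, and, in the $\Gamma$- and $\Gamma_k$-cases, the infinitude of the gathered family so that it is a legitimate $\gamma$- or $\gamma_k$-cover. These are precisely the points at which the four lemmas are invoked; once the dictionary is fixed, the two implications are the verbatim complement-dual of the argument in Theorem~\ref{th12}, and the remaining checks are routine.
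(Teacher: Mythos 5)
Your proposal is correct and is essentially the paper's own proof: the paper dismisses this theorem with ``The proof is similar to the proof of Theorem~\ref{th12},'' and your argument is precisely that complement-duality argument, with the input side read through the ${\bf F^+}$-lemmas and the output side through the ${\bf Z^+}$-lemmas. Your explicit attention to the side conditions (non-membership $E\notin\mathcal{A}_n$ via nontriviality of covers, and infinitude in the $\Gamma$/$\Gamma_k$ cases) only makes explicit what the paper leaves implicit.
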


\begin{proof}  The proof is similar to the proof of Theorem
\ref{th12}.

\end{proof}

\begin{corollary}(Theorem 31 in \cite{cmkm}) For a space $X$ the
following are equivalent:

\begin{enumerate}

\item Each open set $Y\subset X$ has the property $S_{1}(\Gamma_k,
\Omega)$;

\item  For each $E\in 2^X$, $(2^X,{\bf F^+}, {\bf Z^+})$ satisfies
$S_{1}(\Gamma^{\bf F^+}_{E}, \Omega^{\bf Z^+}_{E})$.

\end{enumerate}

\end{corollary}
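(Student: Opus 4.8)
The plan is to read this off as the single instance $\Phi=\Gamma_k$, $\Psi=\Omega$, $\star=1$ of the theorem immediately preceding it; the genuine content therefore sits in that theorem, and what remains here is to line up the bitopological notation with the cover-theoretic statement through the dualities recorded in the preliminary lemmas. First I would fix the dictionary at a point $E\in 2^X$ with $Y:=E^c$ open. By Lemma~\ref{lem2}(1), a countable open family $\mathcal{U}=\{U_n\}$ is a $\gamma_k$-cover of $Y$ exactly when $(U_n^c)$ converges to $E$ in $(2^X,{\bf F^+})$, i.e.\ exactly when $\mathcal{U}^c\in\Gamma^{\bf F^+}_{E}$; dually, by Lemma~\ref{l1}(2), $\mathcal{U}$ is an $\omega$-cover of $Y$ iff $E\in Cl_{Z^+}(\mathcal{U}^c)$, i.e.\ iff $\mathcal{U}^c\in\Omega^{\bf Z^+}_{E}$. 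Since the complementation $\mathcal{U}\mapsto\mathcal{U}^c$ is a bijection that preserves the member-by-member selection datum, this identifies the input family with $\Gamma^{\bf F^+}_E$ and the output family with $\Omega^{\bf Z^+}_E$, so the equivalence is literally the preceding theorem specialized.

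For completeness I would still run both directions explicitly, following the pattern of Theorem~\ref{th12}. For $(1)\Rightarrow(2)$, fix $E\in 2^X$ and a sequence $(\mathcal{A}_n)$ with $\mathcal{A}_n\in\Gamma^{\bf F^+}_{E}$; then $(\mathcal{A}_n^c)$ is a sequence of $\gamma_k$-covers of the open set $E^c$ by Lemma~\ref{lem2}(1), so $S_1(\Gamma_k,\Omega)$ on $E^c$ yields a choice $A_n^c\in\mathcal{A}_n^c$ with $\{A_n^c\}$ an $\omega$-cover of $E^c$, whence $\{A_n\}\in\Omega^{\bf Z^+}_{E}$ by Lemma~\ref{l1}(2). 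For $(2)\Rightarrow(1)$, given an open $Y$ I would set $E:=X\setminus Y$, turn a sequence of $\gamma_k$-covers of $Y$ into a sequence in $\Gamma^{\bf F^+}_{E}$ by complementation, apply (2) to obtain an $\Omega^{\bf Z^+}_{E}$-selection $\{A_n\}$, and translate it back via Lemma~\ref{l1}(2) into an $\omega$-cover $\{A_n^c\}$ of $Y$. Here the closure condition $E\in Cl_{Z^+}(\{A_n\})$ already forces $\{A_n^c\}$ to be a (large) cover of $Y$, so no separate covering check is needed, and the convention that covers are nontrivial guarantees $E\notin\{A_n\}$, so the selected family genuinely sits in $\Omega^{\bf Z^+}_{E}$ rather than merely in its closure.

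I do not expect a serious obstacle, since the statement is a specialization; the only point demanding care — and the reason the result is genuinely bitopological and not a corollary of Theorem~\ref{th11} alone — is that the source family is measured in the ${\bf F^+}$-topology while the target family is measured in the ${\bf Z^+}$-topology. I would therefore invoke Lemma~\ref{lem2}(1) (convergence to $E$ read in ${\bf F^+}$) on the input side and Lemma~\ref{l1}(2) (closure at $E$ read in ${\bf Z^+}$) on the output side, and never interchange them. No compactness or countability assumption on $X$ is required, because every $\gamma_k$-cover and every $\omega$-cover produced by an $S_1$-selection is already countable, so the complemented covers populate $\Gamma^{\bf F^+}_E$ and $\Omega^{\bf Z^+}_E$ directly.
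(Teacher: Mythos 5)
Your proposal is correct and takes essentially the same approach as the paper: the paper obtains this corollary as the instance $\Phi=\Gamma_k$, $\Psi=\Omega$, $\star=1$ of the immediately preceding bitopological theorem, whose proof it declares to be ``similar to the proof of Theorem~\ref{th12}''. Your explicit two-direction argument --- complementation at $E$ with $Y=E^c$, Lemma~\ref{lem2}(1) reading the input side in ${\bf F^+}$ and Lemma~\ref{l1}(2) reading the output side in ${\bf Z^+}$, plus the observation that nontriviality of covers matches the requirement $E\notin\{A_n\}$ --- is exactly the Theorem~\ref{th12} pattern that the paper leaves implicit.
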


\begin{corollary}(Theorem 33 in \cite{cmkm}) For a space $X$ the
following are equivalent:

\begin{enumerate}

\item Each open set $Y\subset X$ has the property $S_{1}(\Gamma_k,
\Gamma)$;

\item  For each $E\in 2^X$, $(2^X,{\bf F^+}, {\bf Z^+})$ satisfies
$S_{1}(\Gamma^{\bf F^+}_{E}, \Gamma^{\bf Z^+}_{E})$.

\end{enumerate}

\end{corollary}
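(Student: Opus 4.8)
The plan is to read this off the theorem above (the one with $\Phi\in\{\Gamma_k,\mathcal{K}\}$, $\Psi\in\{\Gamma,\Omega\}$, $\star\in\{1,fin\}$), taking $\Phi=\Gamma_k$, $\Psi=\Gamma$ and $\star=1$. Under that substitution the hyperspace families in clause (2) of the general theorem specialise exactly to $\Gamma^{\bf F^+}_E$ and $\Gamma^{\bf Z^+}_E$, and clause (1) becomes ``$S_1(\Gamma_k,\Gamma)$ for every open $Y\subset X$'', so the corollary is literally an instance of the theorem. Since, however, the theorem itself is proved only by the remark ``similar to Theorem~\ref{th12}'', I would write out the two implications for this concrete pair so that the translation between convergence at the point $E$ of the hyperspace and the covering type of the complementary open set $Y=E^c$ is explicit; the engine throughout is Lemma~\ref{lem2}, which pairs $\gamma_k$-covers of $Y$ with ${\bf F^+}$-convergence to $E$ and $\gamma$-covers of $Y$ with ${\bf Z^+}$-convergence to $E$.

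For $(1)\Rightarrow(2)$ I would fix $E\in 2^X$, set $Y=E^c$, and start from a sequence $(\mathcal{A}_n:n\in\mathbb{N})$ with each $\mathcal{A}_n\in\Gamma^{\bf F^+}_E$, i.e.\ each $\mathcal{A}_n$ converging to $E$ in $(2^X,{\bf F^+})$. By Lemma~\ref{lem2}(1) the complementary family $\mathcal{A}^c_n$ is a $\gamma_k$-cover of $Y$. Feeding the sequence $(\mathcal{A}^c_n)$ into the hypothesis $S_1(\Gamma_k,\Gamma)$ for the open set $Y$ gives a choice $U_n=A^c_n\in\mathcal{A}^c_n$ with $\{U_n:n\in\mathbb{N}\}$ a $\gamma$-cover of $Y$; Lemma~\ref{lem2}(2) then says that $(A_n)$ converges to $E$ in $(2^X,{\bf Z^+})$, i.e.\ $\{A_n:n\in\mathbb{N}\}\in\Gamma^{\bf Z^+}_E$. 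This is exactly the selection required by $S_1(\Gamma^{\bf F^+}_E,\Gamma^{\bf Z^+}_E)$.

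The converse $(2)\Rightarrow(1)$ runs the same dictionary backwards. Given an open $Y\subset X$ and a sequence $(\mathcal{F}_n)$ of $\gamma_k$-covers of $Y$, I would put $E=X\setminus Y$ and $\mathcal{A}_n=\mathcal{F}^c_n$, which lie in $\Gamma^{\bf F^+}_E$ by Lemma~\ref{lem2}(1). Applying $S_1(\Gamma^{\bf F^+}_E,\Gamma^{\bf Z^+}_E)$ produces $A_n\in\mathcal{A}_n$ with $(A_n)$ converging to $E$ in $(2^X,{\bf Z^+})$, and Lemma~\ref{lem2}(2) turns $\{A^c_n:n\in\mathbb{N}\}$ into a $\gamma$-cover of $Y$, giving the desired selection $A^c_n\in\mathcal{F}_n$ witnessing $S_1(\Gamma_k,\Gamma)$ on $Y$.

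I do not anticipate a real obstacle, since all the content is in Lemma~\ref{lem2}. The two bookkeeping points I would check are, first, that taking complements commutes with the level-by-level choice of $S_1$ (one closed set chosen from $\mathcal{A}_n$ corresponds to one open set chosen from $\mathcal{A}^c_n$), and second, that the $\Gamma$-type families may be treated as honest convergent sequences: each $\gamma$-cover contains a countable $\gamma$-cover, so the ``converging subset'' of the $\Gamma_E$ definition and the ``convergent sequence'' of Lemma~\ref{lem2} are interchangeable here, exactly as noted after Lemma~\ref{lem5}.
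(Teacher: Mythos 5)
Your proposal is correct and is essentially the paper's own route: the paper derives this corollary as the instance $\Phi=\Gamma_k$, $\Psi=\Gamma$, $\star=1$ of the preceding bitopological theorem, whose proof follows the template of Theorem~\ref{th12} with Lemma~\ref{lem2} providing the complementation dictionary between $\gamma_k$-covers/$\gamma$-covers of $Y=E^c$ and ${\bf F^+}$-/${\bf Z^+}$-convergence to $E$. Your write-up just makes explicit the translation that the paper leaves implicit (``sometimes without explicit reference''), including the countability point noted after Lemma~\ref{lem5}.
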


\begin{theorem} Assume that $\Phi\in \{\Gamma_{k}, \mathcal{K}\}$, $\Psi\in \{\Gamma, \Omega\}$, $\star\in \{1,fin\}$, $X$ has the property $S_{\star}(\Phi,
\Psi)$ and $A$ is an $k$-$F_{\sigma}$-set. Then $A$ has the
property $S_{\star}(\Phi, \Psi)$.
\end{theorem}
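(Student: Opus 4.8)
The plan is to reproduce the construction in the proof of Theorem~\ref{th13} essentially verbatim, isolating the single place where the target class $\Psi$ is used and checking that it survives when $\Psi$ is of finite type ($\Gamma$ or $\Omega$) rather than of compact type. I would first treat $\star=fin$, the case $\star=1$ being the same argument with a single selected member in place of a finite selected subfamily at each stage. Since $A$ is a $k$-$F_\sigma$-set, write $A=\bigcup_{i=1}^{\infty}F_i$ with each $F_i$ closed in $X$ and, after replacing $F_i$ by $F_1\cup\dots\cup F_i$, with $(F_i)$ increasing and absorbing every compact subset of $A$. Given a sequence $(\mathcal{U}_i:i\in\mathbb{N})$ with $\mathcal{U}_i\in\Phi_A$, I lift it to $\mathcal{V}_i=\{(X\setminus F_i)\cup U:U\in\mathcal{U}_i\}$, exactly as before; note $\mathcal{V}_i$ covers $X$ because $(X\setminus F_i)\cup A=X$.

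Next I would verify $\mathcal{V}_i\in\Phi$ on $X$, which is the same verification as in Theorem~\ref{th13}: for compact $S\subseteq X$ the set $S\cap F_i$ is compact in $A$, and $S\subseteq(X\setminus F_i)\cup U$ iff $S\cap F_i\subseteq U$; absorption of $S\cap F_i$ by members of $\mathcal{U}_i$ gives the $\mathcal{K}$ case, and the eventual-containment reformulation $\{V:S\nsubseteq V\}=\{U:S\cap F_i\nsubseteq U\}$ gives the $\Gamma_k$ case. Applying the hypothesis $S_{fin}(\Phi,\Psi)$ of $X$ to $(\mathcal{V}_i)$ yields finite $B_i\subseteq\mathcal{V}_i$ with $\bigcup_i B_i\in\Psi$, i.e. an $\omega$-cover (resp. $\gamma$-cover) of $X$.

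The crucial and only genuinely new step is the pullback to $A$. For each selected $V\in B_i$, written $V=(X\setminus F_i)\cup U_V$ with $U_V\in\mathcal{U}_i$, I keep $U_V$ and set $\mathcal{W}=\bigcup_i\{U_V:V\in B_i\}$; I claim $\mathcal{W}\in\Psi_A$. Let $B$ be a \emph{finite} subset of $A$. Because $B$ is compact, the absorbing property gives $i'$ with $B\subseteq F_{i'}$. Since $\bigcup_i B_i$ is an $\omega$-cover (resp. $\gamma$-cover) of $X$, every finite subset of $X$ lies in infinitely many of its members (for $\gamma$-covers this is the definition; for $\omega$-covers, if $B$ lay in only $U_1,\dots,U_m$ one covers $B\cup\{y_1,\dots,y_m\}$ with $y_j\notin U_j$ to reach a contradiction), so, as only finitely many selected members carry index below $i'$, I may choose $V\in B_k$ with $k\ge i'$ and $B\subseteq V$. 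Writing $V=(X\setminus F_k)\cup U_V$ and using $B\subseteq F_{i'}\subseteq F_k$, hence $B\cap(X\setminus F_k)=\emptyset$, forces $B\subseteq U_V$; this gives the $\Omega$ target. For the $\Gamma$ target I would instead note that all but finitely many $V\in\bigcup_i B_i$ contain $B$ and all but finitely many have index $\ge i'$, so $B\nsubseteq U_V$ can occur for only finitely many selected $V$, whence $\{W\in\mathcal{W}:B\nsubseteq W\}$ is finite and $\mathcal{W}$ is a $\gamma$-cover of $A$.

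The main obstacle I anticipate is precisely this last paragraph: I must secure a selected member of \emph{sufficiently large index} $k\ge i'$ that absorbs $B$, so that the error term $X\setminus F_k$ is disjoint from $B$ and the absorption transfers from $\mathcal{V}_k$ back down to $\mathcal{U}_k$. This is where the nestedness and compact-absorbing property of $(F_i)$ combine with the largeness of $\omega$- and $\gamma$-covers (each finite set lying in infinitely many members). The passage from compact test sets, used in Theorem~\ref{th13}, to finite test sets costs nothing, since finite sets are compact and the same $F_{i'}$ is available; thus the argument is structurally identical and only the test-set quantifier changes. For $\star=1$ the selection delivers one $V_i\in\mathcal{V}_i$ per index and the pulled-back family $\{U_{V_i}:i\in\mathbb{N}\}$ is treated in the same way.
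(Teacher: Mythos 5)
Your proposal is correct and takes essentially the same route as the paper: the paper proves this theorem only by reference to Theorem \ref{th13}, and your argument is exactly that proof adapted to the finite-type target classes --- lift each $\mathcal{U}_i$ to $\mathcal{V}_i=\{(X\setminus F_i)\cup U : U\in\mathcal{U}_i\}$, verify $\mathcal{V}_i\in\Phi$ via compactness of $S\cap F_i$, select in $X$, and pull back to $A$ using the nested $F_i$'s, compact absorption, and the fact that each finite set lies in selected members of arbitrarily high index. If anything, you supply details the paper leaves implicit: the increasing normalization of the $F_i$ (needed for the step $B\subseteq F_{i'}\subseteq F_k$), the proof that $\omega$-covers contain each finite set in infinitely many members, and the $\Gamma$-target verification, where the paper's template argument only exhibits a single member containing the test set.
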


\begin{proof}  The proof is similar to the proof of Theorem
\ref{th13}.
\end{proof}

\begin{proposition} Assume that $\Phi\in \{\Gamma_{k}, \mathcal{K}\}$, $\Psi\in \{\Gamma, \Omega\}$, $\star\in \{1,fin\}$.
Then for a $k$-perfect space $X$ the following statements are
equivalent:

\begin{enumerate}

\item $X$ satisfies $S_{\star}(\Phi, \Psi)$;

\item Each open set $Y\subset X$ has the property $S_{\star}(\Phi,
\Psi)$.

\end{enumerate}

\end{proposition}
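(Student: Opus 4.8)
The plan is to obtain this proposition directly from the $k$-$F_{\sigma}$-heredity theorem proved immediately above, together with the definition of a $k$-perfect space.

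The implication $(2)\Rightarrow(1)$ is immediate: $X$ is itself an open subset of $X$, so if every open $Y\subset X$ has $S_{\star}(\Phi,\Psi)$, then so does $X$.

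For $(1)\Rightarrow(2)$ I would fix an arbitrary open set $Y\subset X$. By definition of $k$-perfectness every open subset of $X$ is a $k$-$F_{\sigma}$-set, so $Y$ is a $k$-$F_{\sigma}$-set. Now $X$ satisfies $S_{\star}(\Phi,\Psi)$ with $\Phi\in\{\Gamma_k,\mathcal{K}\}$ and $\Psi\in\{\Gamma,\Omega\}$, which is exactly the hypothesis range of the preceding $k$-$F_{\sigma}$-heredity theorem; applying that theorem to $A:=Y$ gives that $Y$ has $S_{\star}(\Phi,\Psi)$. Since $Y$ was arbitrary, $(2)$ follows.

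Thus no genuine obstacle remains at the level of this proposition; all the work sits in the heredity theorem. The only conceptual point to keep in view --- and the reason the mixed range $\Phi\in\{\Gamma_k,\mathcal{K}\}$, $\Psi\in\{\Gamma,\Omega\}$ is admissible --- is that in that heredity argument the closed pieces $F_i$ of the $k$-$F_{\sigma}$ decomposition absorb every compact subset of $A$, so the modified covers $\{(X\setminus F_i)\cup U:U\in\mathcal{U}_i\}$ are honest $\Phi$-covers of all of $X$, while on the target side one uses only that members of $\Psi$ are \emph{large} covers. Largeness holds for $\gamma$- and $\omega$-covers exactly as for $\gamma_k$- and $k$-covers, so the argument of Theorem \ref{th13} transfers verbatim to the present $\Psi\in\{\Gamma,\Omega\}$ setting.
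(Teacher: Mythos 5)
Your proof is correct and matches the paper's intended argument: the paper states this proposition without proof precisely because it is an immediate consequence of the preceding $k$-$F_{\sigma}$-heredity theorem (whose proof is "similar to Theorem \ref{th13}") applied to an arbitrary open $Y$, which is a $k$-$F_{\sigma}$-set by $k$-perfectness, together with the trivial converse via $Y=X$. Your closing remark on why the mixed range $\Phi\in\{\Gamma_k,\mathcal{K}\}$, $\Psi\in\{\Gamma,\Omega\}$ is admissible in the heredity argument also reflects the paper's reasoning faithfully.
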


\begin{theorem} Assume that $\Phi\in \{\Gamma_{k}, \mathcal{K}\}$, $\Psi\in \{\Gamma, \Omega\}$, $\star\in \{1,fin\}$.  Then for a
$k$-perfect space $X$ the following statements are equivalent:

\begin{enumerate}

\item $X$ satisfies $S_{\star}(\Phi, \Psi)$;

\item $(2^X,{\bf F^+}, {\bf Z^+})$ satisfies
$S_{\star}(\mathcal{D}_{\Phi}, \mathcal{D}_{\Psi})$;

\item For each $E\in 2^X$, $(2^X,{\bf F^+}, {\bf Z^+})$ satisfies
$S_{\star}(\Phi_{E}, \Psi_{E})$.

\end{enumerate}

\end{theorem}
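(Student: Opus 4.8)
The plan is to assemble three results already proved in this section rather than to develop anything new; all the substantive content is carried by the first two theorems of the bitopological section and by the $k$-perfect Proposition immediately preceding this statement. I would introduce the intermediate assertion $(\ast)$: \emph{each open set $Y\subset X$ has the property $S_{\star}(\Phi,\Psi)$}, and then prove the theorem by chaining the equivalences $(2)\Leftrightarrow(1)\Leftrightarrow(\ast)\Leftrightarrow(3)$.

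For the equivalence $(1)\Leftrightarrow(2)$ I would invoke the first theorem of this section verbatim: it asserts, for an arbitrary space $X$, that $X$ satisfies $S_{\star}(\Phi,\Psi)$ if and only if $(2^X,{\bf F^+},{\bf Z^+})$ satisfies $S_{\star}(\mathcal{D}^{\bf F^+}_{\Phi},\mathcal{D}^{\bf Z^+}_{\Psi})$. Here the unadorned symbols $\mathcal{D}_{\Phi}$ and $\mathcal{D}_{\Psi}$ in statement (2) are read, respectively, as $\mathcal{D}^{\bf F^+}_{\Phi}$ (dense, resp.\ converging-to-$\{\emptyset\}$, subsets of $(2^X,{\bf F^+})$ according as $\Phi=\mathcal{K}$ or $\Phi=\Gamma_k$) and as $\mathcal{D}^{\bf Z^+}_{\Psi}$; once this identification is made the equivalence is immediate and requires no hypothesis on $X$. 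Likewise, for $(\ast)\Leftrightarrow(3)$ I would cite the second theorem of this section, which states for an arbitrary space that $(\ast)$ holds if and only if for every $E\in 2^X$ the bitopological space $(2^X,{\bf F^+},{\bf Z^+})$ satisfies $S_{\star}(\Phi^{\bf F^+}_{E},\Psi^{\bf Z^+}_{E})$; again the $\Phi_{E},\Psi_{E}$ of statement (3) are exactly these localized families, computed in ${\bf F^+}$ and ${\bf Z^+}$ respectively.

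The only place the hypothesis enters is the bridge $(1)\Leftrightarrow(\ast)$, and this is precisely the preceding Proposition, which asserts for a $k$-perfect space that $X$ satisfies $S_{\star}(\Phi,\Psi)$ exactly when every open subset does. Chaining the three equivalences then yields $(1)\Leftrightarrow(2)\Leftrightarrow(3)$, as required. Consequently there is no computational obstacle here; the difficulty is purely organizational, namely making sure that the two global-versus-local notations match those of the source theorems and that $k$-perfectness is invoked only at the $(1)\Leftrightarrow(\ast)$ step. The single genuinely nontrivial ingredient sits one level down, in the statement that every $k$-$F_{\sigma}$-set inherits $S_{\star}(\Phi,\Psi)$ from $X$ (the analogue of Theorem \ref{th13} proved above), since it is this inheritance that underlies the $k$-perfect Proposition; I would therefore regard the appeal to that Proposition as the real crux, even though in the present argument it enters only as a black box.
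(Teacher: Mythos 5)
Your proposal is correct and matches the paper's intended argument exactly: the paper states this theorem without an explicit proof precisely because it follows by chaining the section's first theorem (giving $(1)\Leftrightarrow(2)$, with $\mathcal{D}_{\Phi}$, $\mathcal{D}_{\Psi}$ read as $\mathcal{D}^{\bf F^+}_{\Phi}$, $\mathcal{D}^{\bf Z^+}_{\Psi}$), the section's second theorem (giving the equivalence of $(3)$ with the open-hereditary property), and the preceding $k$-perfect Proposition (which bridges $(1)$ to that open-hereditary property, and which indeed rests on the bitopological analogue of Theorem \ref{th13}). Your identification of where $k$-perfectness is used, and of the black-box role of the $k$-$F_{\sigma}$ inheritance theorem, is exactly right.
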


\bibliographystyle{model1a-num-names}
\bibliography{<your-bib-database>}







\end{document}